\documentclass[preprint,12pt]{elsarticle}

\usepackage{amsmath,amssymb,amsthm,amsfonts,bm}
\usepackage{enumitem}
\usepackage[numbers]{natbib}
\usepackage{tabularx}
\usepackage{array}
\usepackage{placeins}
\usepackage{tikz}          
\usetikzlibrary{3d}        
\usetikzlibrary{arrows.meta} 

\newcolumntype{Y}{>{\raggedright\arraybackslash}X}

\theoremstyle{definition}
\newtheorem{definition}{Definition}[section]
\newtheorem{remark}{Remark}[section]
\newtheorem{lemma}{Lemma}[section]
\newtheorem{theorem}{Theorem}[section]
\newtheorem{proposition}{Proposition}[section]
\newtheorem{corollary}[theorem]{Corollary}
\newtheorem{example}[theorem]{Example}

\begin{document}
	
	\begin{frontmatter}
		
	\title{A Parametric Theory of Vector Spaces with Applications to Fuzzy Vector Spaces}
		
		\author[1]{Bayaz Daraby}
		\ead{daraby@maragheh.ac.ir}

		\author[1]{Hasan Haddadzadeh\corref{cor1}}
		\ead{Mh.hadadzadeh@stu.maragheh.ac.ir}
		\ead{m_h_haddad@yahoo.com}
		
		\cortext[cor1]{Corresponding author}
		
		\address[1]{Department of Mathematics, University of Maragheh,
			P.O. Box 55136--553, Maragheh, Iran}
	\begin{abstract}
In this paper, we propose a parameterized framework for vector spaces in which each vector is represented by a family of parameter-dependent realizations.
	
	\end{abstract}

		\begin{keyword}
		
		Fuzzy Vector;
		Fuzzy Labeling;
		Lubczonok Fuzzy Vector Space;
		Dimension Profile;
		Nested Filtration
			\MSC[2020] 46S40 \sep 03E72 \sep 15A03
		\end{keyword}
		
	\end{frontmatter}
	
	\allowdisplaybreaks

\section{Introduction} Since the introduction of fuzzy set theory by Zadeh~\cite{zadeh1965fuzzy}, the interaction between fuzziness and linear algebra has become one of the central topics in fuzzy mathematics. This interaction has led to the development of fuzzy vector spaces, fuzzy linear transformations, fuzzy subspaces, fuzzy normed spaces, and fuzzy topological vector spaces, which now constitute an important part of fuzzy algebra and functional analysis. Among the earliest and most influential models is the fuzzy vector space introduced by Lubczonok~\cite{Lubczonok1990}. In this framework, a fuzzy vector space is a pair $(V,\nu)$, where $V$ is a vector space and $\nu:V\rightarrow[0,1]$ satisfies \[ \nu(ax+by)\ge \nu(x)\wedge \nu(y), \qquad a,b\in\mathbb F,\;x,y\in V. \] Here fuzziness is represented by assigning a membership degree to each vector of the ambient space. This viewpoint has inspired numerous extensions and generalizations involving algebraic, topological, and categorical aspects of fuzzy vector spaces. Recent developments include applications to coding theory, convexity, and generalized algebraic structures \cite{Bejines2024,Li2024,Gereme2024}. Despite their diversity, these models share the common feature that fuzziness is encoded through a membership function on vectors. The philosophy adopted in the present paper is fundamentally different. Instead of assigning a membership degree to each vector, every crisp vector \[ x\in X \] is associated with an entire parameterized family \[ \widetilde{x}=\{x_\alpha\}_{\alpha\in(0,1]}, \] called a \emph{fuzzy labeling} of $x$. The parameter $\alpha$ is not interpreted as a membership value but as an index identifying a representative of the underlying vector. Consequently, fuzziness is encoded through the evolution of these representatives rather than by a fuzzy subset of the ambient space. This construction naturally gives rise to two families of vector spaces. For each $\alpha\in(0,1]$, the representatives generate the level space \[ X_\alpha=\operatorname{span}\{x_\alpha:x\in X\}, \] and the tail space \[ \widetilde X_\alpha = \operatorname{span}\{x_\beta:\beta\ge\alpha,\;x\in X\}. \] The family \[ \{\widetilde X_\alpha\}_{\alpha\in(0,1]} \] forms a nested filtration of vector spaces that records the progressive appearance of new linear directions as the parameter decreases. This filtration contains structural information about the labeling process that has no counterpart in classical membership-based models. To describe this structure, we introduce several algebraic invariants, including the dimension profile \[ \delta(\alpha)=\dim(\widetilde X_\alpha) \] and the persistence level of subspaces. These invariants measure how the filtration evolves and are shown to be preserved under strictly increasing reparameterizations, providing natural tools for the classification of fuzzy labelings. Although the proposed framework is not defined through a membership function, it admits a natural connection with the classical theory. We prove that every fuzzy labeling canonically induces a Lubczonok fuzzy vector space through the persistence function associated with the tail-space filtration. Conversely, under suitable finite-dimensional assumptions, every Lubczonok fuzzy vector space admits a representation by an appropriate fuzzy labeling. This yields a representation theorem linking the two approaches. The correspondence, however, is not one-to-one. Different fuzzy labelings may induce the same Lubczonok fuzzy vector space, showing that classical membership-based models classify fuzzy labelings only up to equivalence. In particular, the labeling mechanism preserves structural information that is lost when only the induced membership function is retained. Another distinctive feature of the labeling framework is the intrinsic order carried by the representatives, providing each generated vector with a unique position in the labeling process. This ordered structure disappears after passing to the associated Lubczonok fuzzy vector space.
\section{Preliminaries}

In this section, we briefly recall several basic concepts from fuzzy set theory and fix the notation used throughout the paper.
Let $\mathbb{R}$ denote the set of real numbers.

\begin{definition} \cite{zadeh1965fuzzy} A \emph{fuzzy set} on $\mathbb{R}$ is a mapping \[ u:\mathbb{R}\rightarrow[0,1]. \] For $\alpha\in(0,1]$, the corresponding $\alpha$-level set (or $\alpha$-cut) of $u$ is defined by \[ [u]_\alpha=	
	\{x\in\mathbb{R}:u(x)\ge\alpha\}. \] \end{definition}
\begin{definition}A fuzzy set $u:\mathbb{R}\rightarrow[0,1]$ is called a \emph{fuzzy number} if
	\begin{enumerate} \item $u$ is normal; \item $u$ is fuzzy convex; \item $u$ is upper semicontinuous; \item $\operatorname{supp}(u)$ is compact. \end{enumerate}
	
	Equivalently, for every $\alpha\in(0,1]$, its $\alpha$-cut is a compact interval \[ [u]_\alpha=
	[\underline{u}(\alpha),\overline{u}(\alpha)], \] where $\underline{u}$ is nondecreasing and $\overline{u}$ is nonincreasing. \end{definition}
The representation of fuzzy numbers through their $\alpha$-cuts provides a convenient parametric description that has become one of the fundamental tools of fuzzy analysis. In this representation, a fuzzy number is completely determined by the family of intervals \[ {[\underline{u}(\alpha),\overline{u}(\alpha)]}_{\alpha\in(0,1]}, \] subject to the monotonicity and continuity conditions of the endpoint functions.
Throughout this paper, let $V$ be a vector space over $\mathbb{R}$ (or $\mathbb{C}$) and let $X$ be a vector subspace of $V$. Unless otherwise stated, all vector space operations are understood in the usual sense.\\
The main idea of the present work is inspired by the parametric philosophy of fuzzy numbers. Instead of associating with an uncertain scalar an $\alpha$-indexed family of interval endpoints, we associate with each crisp vector an $\alpha$-indexed family of vector representatives. This viewpoint serves as the basis for the definition of fuzzy vectors introduced in the next section and provides the foundation for the construction of fuzzy inner products and fuzzy norms developed throughout the paper.
\begin{definition}\cite{Lubczonok1990}
	Let \(V\) be a vector space over a field \(\mathbb{F}\).
	A \emph{fuzzy vector space} in the sense of Lubczonok is a pair
	\[
	(V,\nu),
	\]
	where
	\[
	\nu:V\longrightarrow[0,1]
	\]
	is a membership function satisfying
	\[
	\nu(ax+by)\ge
	\nu(x)\wedge\nu(y),
	\qquad
	\forall\,x,y\in V,\;
	\forall\,a,b\in\mathbb{F}.
	\]
	Here \(\wedge\) denotes the minimum operation on \([0,1]\).

\end{definition}
\begin{proposition}\cite{Lubczonok1990}
	If \(E=	(V,\nu)\) is a fuzzy vector space and if \(v,w\in V\) with
	\[
	\nu(v)\neq\nu(w),
	\]
	then
	\[
	\nu(v+w)=\nu(v)\wedge\nu(w).
	\]
\end{proposition}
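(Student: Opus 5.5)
The plan is to prove the two inequalities $\nu(v+w)\ge\nu(v)\wedge\nu(w)$ and $\nu(v+w)\le\nu(v)\wedge\nu(w)$ separately. The first is immediate from the defining inequality with $a=b=1$. For the second, assume without loss of generality that $\nu(v)<\nu(w)$, so that $\nu(v)\wedge\nu(w)=\nu(v)$. It then suffices to show $\nu(v+w)\le\nu(v)$.

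The key preliminary fact is that $\nu(-w)=\nu(w)$. To see this, take $x=y=w$, $a=-1$, $b=0$ in the definition, which gives $\nu(-w)\ge\nu(w)$. Applying the same argument to $-w$ gives the reverse inequality $\nu(w)\ge\nu(-w)$.

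Now write $v=(v+w)+(-w)$. The defining inequality with $a=b=1$ applied to $x=v+w$ and $y=-w$ yields
\[
\nu(v)\ge \nu(v+w)\wedge\nu(-w)=\nu(v+w)\wedge\nu(w).
\]
Suppose, for contradiction, that $\nu(v+w)>\nu(v)$. Since also $\nu(w)>\nu(v)$, the minimum on the right is strictly larger than $\nu(v)$, which contradicts the displayed inequality. Hence $\nu(v+w)\le\nu(v)$, and combining this with the first inequality gives equality.

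There is no real obstacle in this argument. The only point needing care is the symmetry $\nu(-w)=\nu(w)$, and the definition supplies it directly because it quantifies over all scalars $a,b\in\mathbb F$.
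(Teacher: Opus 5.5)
Your proof is correct and complete. The lower bound comes from the axiom with $a=b=1$, and the upper bound $\nu(v+w)\le\nu(v)$ (when $\nu(v)<\nu(w)$) follows from writing $v=(v+w)+(-w)$ and using $\nu(-w)=\nu(w)$.

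The paper gives no proof of this proposition; it only cites Lubczonok. Your argument is the standard one.

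A small simplification: you can skip the symmetry step. Apply the axiom directly with $x=v+w$, $y=w$, $a=1$, $b=-1$ to get $\nu(v)\ge\nu(v+w)\wedge\nu(w)$ in one line.
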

۱\begin{definition}\cite{Ghrist2014}
	Let \(V\) be a vector space.
	A family of subspaces
	\[
	\{V_\alpha\}_{\alpha\in(0,1]}
	\]
	is called a \emph{filtration} of \(V\) if
	\[
	\alpha_1<\alpha_2
	\quad\Longrightarrow\quad
	V_{\alpha_2}\subseteq V_{\alpha_1}.
	\]
	
	If, in addition,
	\[
	V=\bigcup_{\alpha\in(0,1]}V_\alpha,
	\]
	then the filtration is called \emph{exhaustive}.
\end{definition}
\section{Parametric Vector}
We begin by introducing the concept of a fuzzy vector, which forms the basis of the results and constructions presented in the remainder of this paper.

\begin{definition}\label{def:fuzzyvector}
	Let $V$ be a vector space over $\mathbb{R}$ (or $\mathbb{C}$), and let $X$ be a vector subspace of $V$. A fuzzy labeling on $X$ is a mapping
	\[
	\mu : (0,1] \times X \longrightarrow V, \qquad (\alpha, x) \mapsto x_\alpha,
	\]
	satisfying
	\[
	x_1 = x, \quad \forall x \in X.
	\]
	For every $x \in X$ and every $\alpha \in (0,1]$, define the $\alpha$-tail fuzzy vector associated with $x$ by
	\begin{equation}\label{eq:alpha-tailfuzzyvector}
     \widetilde{x}_{\alpha}
	:= \{x_{\beta}\}_{\beta \in [\alpha,1]}
	= \{x_{\beta} : \beta \in [\alpha,1]\}.
		\end{equation}
	The family $\widetilde{x}_{\alpha}$ is called the $\alpha$-level tail fuzzy vector associated with $x$.\\
	For every $x \in X$, the family
	\begin{equation}\label{eq:fuzzyvectorassociated-x}
     \widetilde{x} := \{x_{\beta}\}_{\beta \in (0,1]}
     \end{equation}
	is called the fuzzy vector associated with $x$, and $x = x_1$ is called its underlying crisp vector. The vector $x_\alpha$ is called the $\alpha$-level representative of $\widetilde{x}$.\\
	For every $\alpha \in (0,1]$, define
	\begin{equation}\label{vectorspacegenerated-alpha-levelrepresentatives}
	X_{\alpha} := \operatorname{span}\{x_{\alpha} : x \in X\},
	\end{equation}
	that is, the vector subspace generated by all $\alpha$-level representatives,
	and
\begin{equation}\label{vectorsubspacegenerated-alpha-levelfuzzytail}
    \widetilde{X}_{\alpha} := \operatorname{span}\{x_{\beta} : x \in X,\ \beta \in [\alpha,1]\}.
\end{equation}
	We assume that
	\[
	X_{1} =	\widetilde{X}_{1} = X.
	\]
	
\end{definition}

 \begin{remark}
 	\begin{enumerate} \item The space $X_\alpha$ consists precisely of the vectors generated by the representatives at the fixed level $\alpha$. In the subsequent sections, the fuzzy inner product and the induced fuzzy norm will be defined on the family of spaces \[ \{X_\alpha\}_{\alpha\in(0,1]}. \] 
 		\item The auxiliary family \[ \{\widetilde{X}_\alpha\}_{\alpha\in(0,1]} \] captures the hierarchical structure of the $\alpha$-levels. Indeed, if $\alpha\ge\gamma$, then \[ [\alpha,1]\subseteq[\gamma,1], \] which immediately implies \[ \widetilde{X}_\alpha \subseteq \widetilde{X}_\gamma. \] Hence, \[ \{\widetilde{X}_\alpha\}_{\alpha\in(0,1]} \] forms a nested family of vector spaces, reflecting the behavior of $\alpha$-cuts in the theory of fuzzy sets.
 		 \item The proposed notion of a fuzzy vector is inspired by the parametric representation of fuzzy numbers. Instead of assigning a membership function directly to vectors, uncertainty is encoded through the family\[
 		 \tilde{x}:=\left\{x_\alpha\,:\,\alpha\in(0,1]\right\}.
 		 \] where the normalization condition \[ x_1=x \] ensures that every fuzzy vector possesses a unique underlying crisp representative. This viewpoint is consistent with the parametric approach developed for fuzzy numbers and provides a convenient framework for introducing fuzzy inner products and fuzzy norms. Moreover, it follows the same philosophy as the parametric representation of fuzzy numbers developed by Bede~\cite{BedeStefanini2006} and coauthors, where a fuzzy quantity is characterized by its family of $\alpha$-level representatives.

  \end{enumerate} \end{remark}

\begin{example} Let \[ V=\mathbb{R}^4, \qquad X=\{(a_1,a_2,0,0):a_1,a_2\in\mathbb{R}\}, \] which is a two-dimensional vector subspace of $\mathbb{R}^4$. For each \[ x=(a_1,a_2,0,0)\in X, \] define its $\alpha$-level representative by \[ x_\alpha = \bigl(a_1,\, a_2,\, p(\alpha)a_1,\, (1-\alpha)a_2 \bigr), \qquad \alpha\in(0,1], \] where \[ p(\alpha)= \begin{cases} 0, & \alpha\in\left(\frac12,1\right],\\[2mm] \alpha, & \alpha\in\left(0,\frac12\right]. \end{cases} \] Since \[ p(1)=0 \quad\text{and}\quad 1-1=0, \] we obtain \[ x_1=(a_1,a_2,0,0)=x. \] Hence, the normalization condition in Definition~\ref{def:fuzzyvector} is satisfied. Therefore, \[ \widetilde{x} = \left\{x_\alpha:\alpha\in(0,1]\right\} \] is a fuzzy vector associated with the crisp vector \[ x=(a_1,a_2,0,0). \] Moreover, for each $\alpha\in(0,1]$, the corresponding space \[ X_\alpha = \operatorname{span} \{x_\alpha:x\in X\} \] is given by \[ X_\alpha = \left\{ \bigl(a_1,\, a_2,\, p(\alpha)a_1,\, (1-\alpha)a_2 \bigr): a_1,a_2\in\mathbb{R} \right\}, \] which is a two-dimensional vector subspace of $\mathbb{R}^4$.
Moreover, \[ \widetilde{X}_\alpha = \operatorname{span} \{x_\beta:\;x\in X,\ \beta\in[\alpha,1]\}. \] If $\alpha>\frac12$, then $p(\beta)=0$ for every $\beta\in[\alpha,1]$, and hence \[ x_\beta = (a_1,a_2,0,(1-\beta)a_2). \] Therefore, \[ \widetilde{X}_\alpha = \operatorname{span} \{(1,0,0,0),(0,1,0,0),(0,0,0,1)\} = \{(u,v,0,w):u,v,w\in\mathbb{R}\}. \] On the other hand, if $0<\alpha\le\frac12$, then the interval $[\alpha,1]$ contains values $\beta\le\frac12$ for which $p(\beta)=\beta>0$. Consequently, the vectors \[ (1,0,\beta,0), \qquad (1,0,0,0), \] belong to the generating family, whose difference yields \[ (0,0,\beta,0), \] and hence $(0,0,1,0)$ also belongs to the span. It follows that \[ \widetilde{X}_\alpha = \mathbb{R}^4, \qquad 0<\alpha\le\frac12. \]
 \end{example}
 The following proposition shows that every fuzzy vector canonically induces a fuzzy set in the sense of Zadeh. Hence, the proposed framework extends the classical fuzzy-set representation while preserving its standard interpretation through $\alpha$-cuts. Moreover, unlike the classical approach, the present construction naturally gives rise to a family of vector spaces ${\widetilde{X}\alpha}_{\alpha\in(0,1]}$, providing an algebraic framework in which linear operations and further analytical structures can be developed.

\begin{remark}[Induced Fuzzy Set and Level Reconstruction]	\label{prop:Fuzzysetinducedfuzzyvectorxxxxx}
	Let
	\[
	\widetilde{x}=\{x_\alpha\}_{\alpha\in(0,1]}
	\]
	be a fuzzy vector whose parametrization
	\[
	\Phi:(0,1]\to V,\qquad
	\Phi(\alpha)=x_\alpha,
	\]
	is injective. Define the induced membership function
	\[
	\mu_{\widetilde{x}}:V\to[0,1]
	\]
	by
	\[
	\mu_{\widetilde{x}}(y)=
	\begin{cases}
		\alpha, & \text{if } y=x_\alpha \text{ for some }\alpha\in(0,1],\\
		0, & \text{otherwise}.
	\end{cases}
	\]
	Then each representative vector retains exactly its generating level,
	\[
	\mu_{\widetilde{x}}(x_\alpha)=\alpha,
	\qquad
	\forall\,\alpha\in(0,1].
	\]
	Consequently, every point $y\in\widetilde{x}$ can be recovered from its
	$\alpha$-cuts through
	\[
	\mu_{\widetilde{x}}(y)
	=
	\sup\{\alpha\in(0,1]:
	y\in[\mu_{\widetilde{x}}]_\alpha\},
	\]
	which is the standard level-reconstruction property of fuzzy sets.
	Moreover, since $\widetilde{x}$ contains only elements generated at positive
	levels, the induced membership values belong to $(0,1]$. By assigning the
	value $0$ to every vector in $V\setminus\widetilde{x}$, the function
	$\mu_{\widetilde{x}}$ becomes a conventional fuzzy subset of $V$.
\end{remark}

 \begin{lemma}[Injective Parametrization and Reconstruction]\label{lem:InjectiveRepresentation}
 Assume that the mapping
 	\[
 	\Phi:(0,1]\to V,
 	\qquad
 	\Phi(\alpha)=x_\alpha,
 	\]
 	is injective.
 	Then:
 	
 	\begin{enumerate}
 		
 		\item
 		For every \(\alpha\in(0,1]\),
 		\[
 		\mu_{\widetilde{x}}(x_\alpha)=\alpha.
 		\]
 		
 		\item
 		The membership function \(\mu_{\widetilde{x}}\) uniquely determines the
 		parameterization \(\alpha\mapsto x_\alpha\). More precisely, for every
 		\(\alpha\in(0,1]\), the element \(x_\alpha\) is the unique point of
 		\(\widetilde{x}\) satisfying
 		\[
 		\mu_{\widetilde{x}}(x_\alpha)=\alpha.
 		\]
 		Consequently, the fuzzy vector
 		\[
 		\widetilde{x}
 		=
 		\{x_\alpha\}_{\alpha\in(0,1]}
 		\]is uniquely determined by its induced membership function.
 		
 	\end{enumerate}
 	
 \end{lemma}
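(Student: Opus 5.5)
The plan is to argue directly from the definition of $\mu_{\widetilde{x}}$ in Remark~\ref{prop:Fuzzysetinducedfuzzyvectorxxxxx}. The only real content is that injectivity of $\Phi$ makes this definition unambiguous. So I will first check well-definedness, then read off item (1), and then obtain item (2) as a reconstruction statement.

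\textbf{Step 1: well-definedness.} Take $y\in V$ and suppose $y=x_\alpha=x_{\alpha'}$ for some $\alpha,\alpha'\in(0,1]$. Then $\Phi(\alpha)=\Phi(\alpha')$, and injectivity gives $\alpha=\alpha'$. Hence the first case of the definition assigns to $y$ a single value. The two cases are mutually exclusive, so $\mu_{\widetilde{x}}:V\to[0,1]$ is a genuine function. This is the step where the hypothesis is actually used; without it, $x_\alpha=x_{\alpha'}$ with $\alpha\neq\alpha'$ would force two membership values on one point.

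\textbf{Step 2: item (1).} Fix $\alpha$. The vector $y=x_\alpha$ falls under the first case with parameter $\alpha$. By Step 1 this parameter is the only one available, so $\mu_{\widetilde{x}}(x_\alpha)=\alpha$.

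\textbf{Step 3: item (2).} Let $y\in\widetilde{x}$ satisfy $\mu_{\widetilde{x}}(y)=\alpha$. Since $y\in\widetilde{x}$, we can write $y=x_\gamma$ for some $\gamma\in(0,1]$. By item (1), $\mu_{\widetilde{x}}(y)=\gamma$, so $\gamma=\alpha$ and therefore $y=x_\alpha$. Thus $x_\alpha$ is the unique point of $\widetilde{x}$ with membership value $\alpha$. Equivalently, $\mu_{\widetilde{x}}$ restricted to $\widetilde{x}$ is the inverse of $\Phi$ viewed as a bijection onto its image:
\[
\Phi(\alpha)=\bigl(\mu_{\widetilde{x}}|_{\widetilde{x}}\bigr)^{-1}(\alpha).
\]

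It remains to show that $\mu_{\widetilde{x}}$ alone determines the parametrization. The plan is to recover the set $\widetilde{x}$ from $\mu_{\widetilde{x}}$ as its support, $\{y\in V:\mu_{\widetilde{x}}(y)>0\}$. This works because the representatives receive values in $(0,1]$ and every other vector receives $0$. Then, by the previous paragraph, $x_\alpha$ is the unique $y$ in this support with $\mu_{\widetilde{x}}(y)=\alpha$. Consequently, two injective labelings with the same induced membership function agree at every level.

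I expect this support-recovery step to be the main obstacle, mild as it is. The statement phrases uniqueness relative to $\widetilde{x}$, so I must make sure the argument does not presuppose knowledge of $\widetilde{x}$. The positivity of the levels ($\alpha\in(0,1]$, never $0$) is exactly what makes $\widetilde{x}=\mu_{\widetilde{x}}^{-1}((0,1])$, and I will state this explicitly.
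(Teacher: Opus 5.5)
Your proposal is correct and follows essentially the same route as the paper. Injectivity gives $\{\beta\in(0,1]:x_\beta=x_\alpha\}=\{\alpha\}$, hence $\mu_{\widetilde{x}}(x_\alpha)=\alpha$, and uniqueness in (2) follows by writing $y=x_\gamma$ and applying (1). The differences are cosmetic: the paper evaluates $\mu_{\widetilde{x}}(x_\alpha)$ as $\sup\{\beta:x_\beta=x_\alpha\}$ instead of checking that the case definition is well defined, and it leaves implicit the recovery $\widetilde{x}=\mu_{\widetilde{x}}^{-1}((0,1])$, which you state explicitly (and which is automatic, since $\alpha>0$ forces any $y$ with $\mu_{\widetilde{x}}(y)=\alpha$ to lie in $\widetilde{x}$).
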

 
 \begin{proof}
 	Since \(\Phi\) is injective,
 	\[
 	x_\alpha=x_\beta
 	\iff
 	\alpha=\beta.
 	\]
 	Hence
 	\[
 	\{\beta\in(0,1]:x_\beta=x_\alpha\}
 	=
 	\{\alpha\},
 	\]
 	and therefore
 	\[
 	\mu_{\widetilde{x}}(x_\alpha)
 	=
 	\sup\{\beta:x_\beta=x_\alpha\}
 	=
 	\sup\{\alpha\}
 	=
 	\alpha,
 	\]
 	which proves (1).
 	Now let \(y\in\widetilde{x}\) satisfy
 	\[
 	\mu_{\widetilde{x}}(y)=\alpha.
 	\]
 	Since \(y\in\widetilde{x}\), there exists \(\beta\in(0,1]\) such that
 	\(y=x_\beta\).
 	Using (1),
 	\[
 	\alpha
 	=
 	\mu_{\widetilde{x}}(y)
 	=
 	\mu_{\widetilde{x}}(x_\beta)
 	=
 	\beta.
 	\]
 	Hence
 	\[
 	y=x_\beta=x_\alpha.
 	\]
 	Thus \(x_\alpha\) is uniquely determined by its membership value, and so
 	the entire family
 	\[
 	\widetilde{x}
 	=
 	\{x_\alpha\}_{\alpha\in(0,1]}
 	\]
 	is uniquely determined by \(\mu_{\widetilde{x}}\).
 	
 \end{proof}

\begin{proposition}[Reconstruction from Parametrized $\alpha$-Cuts]
	\label{prop:ReconstructionFromAlphaCuts}
	Let
	\[
	\widetilde{x}
	=
	\{x_\alpha\}_{\alpha\in(0,1]}
	\]
	be a fuzzy vector induced by an injective parametrization
	\[
	\Phi:(0,1]\to V,\qquad \Phi(\alpha)=x_\alpha.
	\]
	Assume that a fuzzy subset \(\mu:V\to[0,1]\) satisfies
	\[
	[\mu]_\alpha
	=
	\{x_\beta:\beta\ge\alpha\},
	\qquad \forall \alpha\in(0,1].
	\]Then
	\[
	\mu=\mu_{\widetilde{x}}.
	\]
	Consequently, the fuzzy vector \(\widetilde{x}\) is uniquely determined by its
	\(\alpha\)-cuts.
	
\end{proposition}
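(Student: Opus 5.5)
The plan is to compare \(\mu\) and \(\mu_{\widetilde{x}}\) pointwise on \(V\). I split into two cases: \(y\notin\widetilde{x}\) and \(y=x_\gamma\) for some \(\gamma\in(0,1]\). Injectivity of \(\Phi\) matters in two places. It makes \(\mu_{\widetilde{x}}\) well defined, since each \(y\in\widetilde{x}\) has a unique level. It also converts membership in a cut into an inequality between parameters. Lemma~\ref{lem:InjectiveRepresentation}(1) then supplies \(\mu_{\widetilde{x}}(x_\gamma)=\gamma\).

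\emph{Case \(y\notin\widetilde{x}\).} Then \(y\notin\{x_\beta:\beta\ge\alpha\}=[\mu]_\alpha\) for every \(\alpha\in(0,1]\). By the definition of the \(\alpha\)-cut, this means \(\mu(y)<\alpha\) for every \(\alpha\in(0,1]\). Since \(\mu(y)\in[0,1]\), it follows that \(\mu(y)=0=\mu_{\widetilde{x}}(y)\).

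\emph{Case \(y=x_\gamma\).} The key observation is the equivalence
\[
x_\gamma\in[\mu]_\alpha \iff \gamma\ge\alpha .
\]
For the forward direction, if \(x_\gamma=x_\beta\) with \(\beta\ge\alpha\), injectivity gives \(\beta=\gamma\). The converse is immediate. Taking \(\alpha=\gamma\) gives \(x_\gamma\in[\mu]_\gamma\), hence \(\mu(x_\gamma)\ge\gamma\). Now suppose \(\mu(x_\gamma)>\gamma\), and put \(\alpha:=\mu(x_\gamma)\in(\gamma,1]\). Then \(x_\gamma\in[\mu]_\alpha\), because the cut is defined with a non-strict inequality. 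The equivalence then forces \(\gamma\ge\alpha\), a contradiction. Hence \(\mu(x_\gamma)=\gamma=\mu_{\widetilde{x}}(x_\gamma)\) by Lemma~\ref{lem:InjectiveRepresentation}(1).

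The two cases together give \(\mu=\mu_{\widetilde{x}}\). For the ``consequently'' part, the cuts determine \(\mu\), and \(\mu=\mu_{\widetilde{x}}\). Lemma~\ref{lem:InjectiveRepresentation}(2) says \(\mu_{\widetilde{x}}\) determines the parametrization \(\alpha\mapsto x_\alpha\). So \(\widetilde{x}\) is determined by its \(\alpha\)-cuts.

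The only delicate step is the upper bound in the second case. It relies on evaluating the cut exactly at the level \(\alpha=\mu(y)\), which is legitimate because \([\mu]_\alpha\) uses \(\ge\). This point is also what makes the standard reconstruction \(\mu(y)=\sup\{\alpha: y\in[\mu]_\alpha\}\) valid here, with the supremum attained.
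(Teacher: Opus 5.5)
Your proof is correct and follows the paper's argument. It uses the same case split into $y\notin\widetilde{x}$ and $y=x_\gamma$, with injectivity giving $x_\gamma\in[\mu]_\alpha \iff \gamma\ge\alpha$. The only differences are cosmetic: you pin down $\mu(x_\gamma)=\gamma$ by a direct two-sided bound from the cut definition, and you cite Lemma~\ref{lem:InjectiveRepresentation} both for $\mu_{\widetilde{x}}(x_\gamma)=\gamma$ and for the ``consequently'' clause, whereas the paper evaluates $\sup\{\alpha\in(0,1]: y\in[\mu]_\alpha\}$ and recomputes $\mu_{\widetilde{x}}(x_\gamma)$ from injectivity.
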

 
\begin{proof}
	Let \(y\in V\).
	If \(y\notin \{x_\alpha:\alpha\in(0,1]\}\), then
	by the assumption on the \(\alpha\)-cuts we have
	\(y\notin[\mu]_\alpha\) for every \(\alpha\in(0,1]\), hence
	\[
	\mu(y)=0=\mu_{\widetilde{x}}(y).
	\]Now assume that \(y=x_\gamma\) for some \(\gamma\in(0,1]\).
	By the assumed structure of the \(\alpha\)-cuts,
	\[
	y\in[\mu]_\alpha \iff \gamma\ge \alpha.
	\]
	Therefore,
	\[
	\mu(y)
	=
	\sup\{\alpha\in(0,1]: y\in[\mu]_\alpha\}
	=
	\sup\{\alpha\le\gamma\}
	=
	\gamma.
	\]On the other hand, injectivity implies
	\[
	\{\beta\in(0,1]: x_\beta=y\}=\{\gamma\},
	\]
	and hence
	\[
	\mu_{\widetilde{x}}(y)
	=
	\sup\{\beta:x_\beta=y\}
	=
	\gamma.
	\]Thus,
	\[
	\mu(y)=\mu_{\widetilde{x}}(y)
	\quad \text{for all } y\in V,
	\]
	which proves that \(\mu=\mu_{\widetilde{x}}\).
	
\end{proof}

 \begin{corollary}[α-cut representation] As a direct consequence of the previous lemma and proposition, the fuzzy set $\mu_{\tilde{x}}$ admits the classical $\alpha$-cut representation: \[ \mu_{\tilde{x}}(y) = \sup_{\alpha\in(0,1]} \alpha \,\chi_{(\mu_{\tilde{x}})_\alpha}(y), \qquad y\in V, \] where $\chi_{(\mu_{\tilde{x}})_\alpha}$ denotes the characteristic function of the $\alpha$-cut $(\mu_{\tilde{x}})_\alpha$. \end{corollary}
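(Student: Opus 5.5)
We need to prove the identity $\mu_{\tilde{x}}(y)=\sup_{\alpha\in(0,1]}\alpha\,\chi_{(\mu_{\tilde{x}})_\alpha}(y)$ for all $y\in V$. The cut $(\mu_{\tilde{x}})_\alpha$ is read as the $\alpha$-cut $[\mu_{\tilde{x}}]_\alpha=\{y:\mu_{\tilde{x}}(y)\ge\alpha\}$. We work under the standing hypothesis of Lemma~\ref{lem:InjectiveRepresentation} and Proposition~\ref{prop:ReconstructionFromAlphaCuts}, namely that $\Phi$ is injective. The identity is the general decomposition theorem for fuzzy sets, so the plan is to verify it pointwise directly. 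We then note how the previous two results identify the cuts concretely.

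\emph{Step 1 (identify the cuts).} By Lemma~\ref{lem:InjectiveRepresentation}(1), $\mu_{\tilde{x}}(x_\beta)=\beta$, and by definition $\mu_{\tilde{x}}$ vanishes off $\tilde{x}$. Hence for $\alpha\in(0,1]$,
\[
[\mu_{\tilde{x}}]_\alpha=\{x_\beta:\beta\ge\alpha\}.
\]
This is exactly the cut structure assumed in Proposition~\ref{prop:ReconstructionFromAlphaCuts}, so that proposition applies with $\mu=\mu_{\tilde{x}}$. It is consistent with the claim, but the identity itself still needs the pointwise computation below.

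\emph{Step 2 (pointwise computation).} Fix $y\in V$ and split into two cases.

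If $y\notin\tilde{x}$, then $y$ lies in no cut with $\alpha>0$. Every term $\alpha\,\chi_{[\mu_{\tilde{x}}]_\alpha}(y)$ is therefore $0$, the supremum is $0$, and this equals $\mu_{\tilde{x}}(y)$.

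If $y=x_\gamma$, injectivity makes $\gamma$ unique. Then $\chi_{[\mu_{\tilde{x}}]_\alpha}(y)=1$ exactly when $\alpha\le\gamma$, so
\[
\sup_{\alpha\in(0,1]}\alpha\,\chi_{[\mu_{\tilde{x}}]_\alpha}(y)=\sup_{0<\alpha\le\gamma}\alpha=\gamma=\mu_{\tilde{x}}(y).
\]

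\emph{Main obstacle.} The only delicate point is Step~1 for the case $y=x_\gamma$: we must ensure that $\mu_{\tilde{x}}(x_\gamma)$ is exactly $\gamma$ and not something else. Injectivity guarantees that no other level $\beta\neq\gamma$ produces the same vector, and this is where Lemma~\ref{lem:InjectiveRepresentation} is used. The supremum is attained at $\alpha=\gamma$ because cuts are defined with $\ge$, so no limiting argument is needed.
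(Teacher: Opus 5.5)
Your proof is correct and follows essentially the paper's route. The paper starts from the level-reconstruction identity $\mu_{\tilde{x}}(y)=\sup\{\alpha\in(0,1]: y\in[\mu_{\tilde{x}}]_\alpha\}$ (read with $\sup\emptyset=0$) and observes that $\alpha\,\chi_{[\mu_{\tilde{x}}]_\alpha}(y)$ equals $\alpha$ on that set and $0$ off it; your case split on $y\in\tilde{x}$ versus $y\notin\tilde{x}$ simply verifies that identity explicitly for $\mu_{\tilde{x}}$. As you yourself note, the decomposition holds for every fuzzy set, so injectivity is not really the ``main obstacle'': it is needed only to make $\mu_{\tilde{x}}$ a well-defined function, and the explicit description of the cuts in Step~1 is not required for the identity.
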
 \begin{proof} For each $y\in V$, we have \[ \mu_{\tilde{x}}(y) = \sup\{\alpha\in(0,1]: y\in [\mu_{\tilde{x}}]_\alpha\}. \] Using the identity \[ \chi_{[\mu_{\tilde{x}}]_\alpha}(y) = \begin{cases} 1, & y\in [\mu_{\tilde{x}}]_\alpha,\\ 0, & \text{otherwise}, \end{cases} \] we obtain \[ \alpha \,\chi_{[\mu_{\tilde{x}}]_\alpha}(y) = \begin{cases} \alpha, & y\in [\mu_{\tilde{x}}]_\alpha,\\ 0, & \text{otherwise}. \end{cases} \] Taking supremum over $\alpha\in(0,1]$ yields the desired representation.\qedhere\end{proof} which is consistent with the classical representation theorem of fuzzy sets via level sets. 
\begin{remark}
	Under the injectivity assumption, Lemma~\ref{lem:InjectiveRepresentation}
	shows that
	\[
	\mu_{\tilde{x}}(x_\alpha)=\alpha.
	\]
	Consequently, the membership function provides an order-preserving identification
	between the parameter set $(0,1]$ and the support
	\[
	\widetilde{x}=\{x_\alpha:\alpha\in(0,1]\}.
	\]
\end{remark}

 \begin{remark}
 	Under the injectivity assumption of
 	Lemma~\ref{lem:InjectiveRepresentation},
 	the underlying crisp vector is uniquely characterized by its membership
 	value. Indeed,
 	\[
 	\mu_{\widetilde{x}}(x)
 	=
 	\mu_{\widetilde{x}}(x_1)
 	=
 	1,
 	\]
 	and if \(y\in\widetilde{x}\) satisfies
 	\[
 	\mu_{\widetilde{x}}(y)=1,
 	\]
 	then \(y=x_1=x\). Consequently,
 	\[
 	\mu_{\widetilde{x}}(y)<1,
 	\qquad
 	\forall\,y\in\widetilde{x}\setminus\{x\}.
 	\]
 	Thus, the underlying crisp vector is the unique element of the fuzzy
 	vector having maximal membership.
 \end{remark}

 \begin{remark} The above proposition provides a mathematical justification for viewing the induced membership function as an interface between the crisp and fuzzy descriptions of a vector. The normalization condition \[ x_1=x \] embeds the underlying crisp vector into the fuzzy framework, while the identity \[ \mu_{\tilde{x}}(x_\alpha)=\alpha \] shows that each level representative is assigned precisely its corresponding confidence level. Consequently, the family \[ \{x_\alpha\}_{\alpha\in(0,1]} \] describes a gradual transition from the crisp vector to its fuzzy realization, and the induced membership function preserves this parametric information within the classical fuzzy-set framework. \end{remark}
 
 \begin{example}
	Let $V=\mathbb{R}^2$ and define a fuzzy vector $\tilde{x}=\{x_\alpha\}_{\alpha\in(0,1]}$ by
	\[
	x_\alpha = (\alpha, 1-\alpha), \qquad \alpha\in(0,1].
	\]
	Then the induced fuzzy set $\mu_{\tilde{x}}$ is given by
	\[
	\mu_{\tilde{x}}(x_\alpha)=\alpha.
	\]
	Hence, the $\alpha$-cuts are
	\[
	[\mu_{\tilde{x}}]_\alpha
	=
	\{x_\beta:\beta\ge \alpha\}
	=
	\{(\beta,1-\beta): \beta\ge \alpha\}.
	\]
		In particular, we have:
	\[
	[\mu_{\tilde{x}}]_\alpha
	=
	\{(t,1-t): t\in[\alpha,1]\}.
	\]
	Now fix a point $y=(y_1,y_2)\in \mathbb{R}^2$. Then
	\[
	\mu_{\tilde{x}}(y)
	=
	\sup\{\alpha\in(0,1]: y=(\alpha,1-\alpha)\}.
	\]
	Therefore:
	\[
	\mu_{\tilde{x}}(y)=
	\begin{cases}
		\alpha, & \text{if } y=(\alpha,1-\alpha)\text{ for some }\alpha\in(0,1],\\[2mm]
		0, & \text{otherwise}.
	\end{cases}
	\]
	Finally, using the α-cut representation theorem, we obtain:
	\[
	\mu_{\tilde{x}}(y)
	=
	\sup_{\alpha\in(0,1]} \alpha\,\chi_{[\mu_{\tilde{x}}]_\alpha}(y),
	\]
	which explicitly reconstructs the membership function from its α-cuts.
\end{example}

\begin{theorem}[Change of Level Parameter Theorem]
	Let
	\[
	\widetilde{x}=\{x_\alpha\}_{\alpha\in(0,1]}
	\]
	be a fuzzy vector satisfying the assumptions of Proposition~\ref{prop:ReconstructionFromAlphaCuts}. Let
	\[
	f:(0,1]\longrightarrow I
	\]
	be a strictly increasing bijection onto an interval
	\(I\subseteq \mathbb{R}\).
	Define a new parameter
	\[
	p=f(\alpha),
	\]
	and let
	\[
	\alpha=f^{-1}(p).
	\]Then:
	
	\begin{enumerate}
		\item For every $\alpha\in(0,1]$,
		\[
		p=f\bigl(\mu_{\widetilde{x}}(x_\alpha)\bigr).
		\]
		
		\item The family $\widetilde{x}$ admits the equivalent representation
		\[
		\widetilde{x}
		=
		\{x_{f^{-1}(p)}\}_{p\in I}.
		\]
		\item For every $p\in I$,
		\[
		\mu_{\widetilde{x}}
		\bigl(x_{f^{-1}(p)}\bigr)
		=
		f^{-1}(p).
		\]
		
		\item For every $p_1,p_2\in I$,
		\[
		p_1<p_2
		\quad\Longleftrightarrow\quad
		f^{-1}(p_1)<f^{-1}(p_2).
		\]
		Hence the ordering of fuzzy levels is preserved under the
		reparametrization.
	\end{enumerate}
\end{theorem}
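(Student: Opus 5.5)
The proof reduces every item to two facts: Lemma~\ref{lem:InjectiveRepresentation}(1), which gives $\mu_{\widetilde{x}}(x_\alpha)=\alpha$ under the injectivity assumption inherited from Proposition~\ref{prop:ReconstructionFromAlphaCuts}, and the elementary properties of the strictly increasing bijection $f$. I will treat the four items in order, since each uses only the preceding ones together with these two facts.

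For item (1), fix $\alpha\in(0,1]$ and set $p=f(\alpha)$. By Lemma~\ref{lem:InjectiveRepresentation}(1) we have $\mu_{\widetilde{x}}(x_\alpha)=\alpha$. Applying $f$ gives $f\bigl(\mu_{\widetilde{x}}(x_\alpha)\bigr)=f(\alpha)=p$.

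For item (2), the map $f^{-1}:I\to(0,1]$ is a bijection. As $p$ ranges over $I$, the parameter $\alpha=f^{-1}(p)$ therefore ranges over all of $(0,1]$, each value exactly once. It follows that $\{x_{f^{-1}(p)}:p\in I\}=\{x_\alpha:\alpha\in(0,1]\}$ as sets. The same holds as indexed families, because the reindexing $p\mapsto f^{-1}(p)$ is a bijection of index sets. This set equality is the representation claimed in (2).

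For item (3), take $p\in I$ and put $\alpha=f^{-1}(p)\in(0,1]$. Lemma~\ref{lem:InjectiveRepresentation}(1) then gives $\mu_{\widetilde{x}}(x_{f^{-1}(p)})=f^{-1}(p)$ directly.

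For item (4), we need $p_1<p_2\iff f^{-1}(p_1)<f^{-1}(p_2)$, that is, that $f^{-1}$ is strictly increasing.
\begin{itemize}
\item[$(\Rightarrow)$] Suppose $p_1<p_2$ but $f^{-1}(p_1)\ge f^{-1}(p_2)$. Since $f$ is increasing, applying it yields $p_1\ge p_2$, a contradiction.
\item[$(\Leftarrow)$] If $f^{-1}(p_1)<f^{-1}(p_2)$, applying the strictly increasing $f$ gives $p_1<p_2$.
\end{itemize}
Combined with (3), this shows that the ordering of membership levels of the representatives matches the ordering of the new parameter, which is the stated order preservation.

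There is no real obstacle. The only point needing care is in (2): equality should be read both as sets and as reindexed families, and this is justified because $f$ is bijective, not merely injective. The interval hypothesis on $I$ is not needed for any step.
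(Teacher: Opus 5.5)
Your proposal is correct and follows the paper's proof essentially step for step: Lemma~\ref{lem:InjectiveRepresentation}(1) together with $f$ gives items (1) and (3), bijectivity of $f$ gives the reindexing in (2), and strict monotonicity of $f^{-1}$ gives (4). Your explicit check that $f^{-1}$ is strictly increasing and your observation that the interval hypothesis on $I$ is never used are small refinements that the paper leaves implicit.
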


\begin{proof}
	Since the parametrization
	\[
	\Phi:(0,1]\to V,
	\qquad
	\Phi(\alpha)=x_\alpha,
	\]
	is injective, Lemma~\ref{lem:InjectiveRepresentation} yields
	\[
	\mu_{\widetilde{x}}(x_\alpha)=\alpha,
	\qquad
	\forall \alpha\in(0,1].
	\]
	Applying $f$ to both sides gives
	\[
	f\bigl(\mu_{\widetilde{x}}(x_\alpha)\bigr)
	=
	f(\alpha)
	=
	p,
	\]
	which proves (1).
	Since $f$ is bijective, every $p\in I$ corresponds to a unique level
	\[
	\alpha=f^{-1}(p).
	\]
	Therefore,
	\[
	\{x_\alpha\}_{\alpha\in(0,1]}
	=
	\{x_{f^{-1}(p)}\}_{p\in I},
	\]
	which proves (2).
	Using again Proposition~3.2, we obtain
	\[
	\mu_{\widetilde{x}}
	\bigl(x_{f^{-1}(p)}\bigr)
	=
	f^{-1}(p),
	\]
	proving (3).
	Since $f$ is strictly increasing, its inverse $f^{-1}$ is also strictly
	increasing. Hence
	\[
	p_1<p_2
	\quad\Longleftrightarrow\quad
	f^{-1}(p_1)<f^{-1}(p_2),
	\]
	which proves (4).
\end{proof}

\begin{example}
	Consider the vector space
	\[
	V=\mathbb{R}^{2},
	\]
	and let
	\[
	x=(1,0)\in V.
	\]
	Define a fuzzy vector
	\[
	\widetilde{x}
	=
	\{x_\alpha\}_{\alpha\in(0,1]}
	\]
	by
	\[
	x_\alpha=(1,1-\alpha),
	\qquad
	\alpha\in(0,1].
	\]
	Since
	\[
	x_1=(1,0)=x,
	\]
	the normalization condition is satisfied.
	Moreover, the parametrization
	\[
	\Phi:(0,1]\to\mathbb{R}^2,
	\qquad
	\Phi(\alpha)=x_\alpha,
	\]
	is injective. Indeed, if
	\[
	x_{\alpha_1}=x_{\alpha_2},
	\]
	then
	\[
	(1,1-\alpha_1)=(1,1-\alpha_2),
	\]
	which implies
	\[
	\alpha_1=\alpha_2.
	\]Hence, by Proposition~3.2,
	\[
	\mu_{\widetilde{x}}(x_\alpha)=\alpha,
	\qquad
	\forall \alpha\in(0,1].
	\]
	Now consider the strictly increasing bijection
	\[
	f:(0,1]\to [0,\infty),
	\qquad
	f(\alpha)=-\ln(\alpha).
	\]
	Let
	\[
	p=f(\alpha).
	\]
	Then
	\[
	\alpha=e^{-p}.
	\]
	By the Change of Level Parameter Theorem,
	\[
	\widetilde{x}
	=
	\{x_{e^{-p}}\}_{p\ge0}.
	\]Substituting \(\alpha=e^{-p}\) into the definition of \(x_\alpha\), we obtain
	\[
	x_{e^{-p}}
	=
	(1,1-e^{-p}),
	\qquad p\ge0.
	\]Furthermore,
	\[
	\mu_{\widetilde{x}}(x_{e^{-p}})
	=
	e^{-p}.
	\]For example,
	\[
	p=0
	\quad\Longrightarrow\quad
	x_{e^{-p}}=(1,0),
	\]
	and
	\[
	\mu_{\widetilde{x}}(x_{e^{-p}})=1.
	\]Also,
	\[
	p=\ln 2
	\quad\Longrightarrow\quad
	x_{e^{-p}}
	=
	\left(1,\frac12\right),
	\]
	and
	\[
	\mu_{\widetilde{x}}
	\left(
	1,\frac12
	\right)
	=
	\frac12.
	\]
	Therefore, the parameter \(p\) provides an equivalent representation of
	the same fuzzy vector, replacing the level parameter \(\alpha\in(0,1]\)
	with the additive parameter \(p\in[0,\infty)\).
\end{example}

\section{Structural Analysis of Fuzzy Spaces Induced by Fuzzy Vectors}
\begin{proposition}
[Dimension Profile]
	Assume that $X$ is finite-dimensional and let
	\[
	\widetilde X_\alpha
	=
	\operatorname{span}
	\{x_\beta:\beta\in[\alpha,1],\,x\in X\}.
	\]
	
	Define
	\[
	\delta:(0,1]\longrightarrow\mathbb N,
	\qquad
	\delta(\alpha)=\dim(\widetilde X_\alpha).
	\]
	Then:
	
	\begin{enumerate}
		\item
		If
		\[
		\alpha_1<\alpha_2,
		\]
		then
		\[
		\widetilde X_{\alpha_2}
		\subseteq
		\widetilde X_{\alpha_1},
		\]
		and consequently
		\[
		\delta(\alpha_2)
		\le
		\delta(\alpha_1).
		\]
		
		\item
		The function $\delta$ is non-increasing.
		
		\item
		Since
		\[
		\dim(X)<\infty,
		\]
		the function $\delta$ assumes only finitely many values.
		Hence there exist numbers
		\[
		1=\alpha_0>\alpha_1>\cdots>\alpha_m>0
		\]
		such that $\delta$ is constant on each interval
		\[
		(\alpha_i,\alpha_{i-1}],
		\]
		and decreases only at the finitely many critical levels
		\[
		\alpha_1,\ldots,\alpha_m.
		\]
	\end{enumerate}
	Therefore the family
	\[
	\{\widetilde X_\alpha\}_{\alpha\in(0,1]}
	\]
	admits a finite stratification according to dimension.
\end{proposition}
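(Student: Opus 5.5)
The plan is to prove the three items in order, using only the definition of $\widetilde X_\alpha$ and elementary linear algebra. The one genuinely delicate point is the shape of the intervals in item (3).

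For item (1), fix $\alpha_1<\alpha_2$. Then $[\alpha_2,1]\subseteq[\alpha_1,1]$, so the generating family $\{x_\beta:\beta\in[\alpha_2,1],\,x\in X\}$ of $\widetilde X_{\alpha_2}$ is contained in that of $\widetilde X_{\alpha_1}$. Taking spans gives $\widetilde X_{\alpha_2}\subseteq\widetilde X_{\alpha_1}$, exactly as in the Remark following Definition~\ref{def:fuzzyvector}. Monotonicity of dimension under inclusion then gives $\delta(\alpha_2)\le\delta(\alpha_1)$. Item (2) is a restatement of (1).

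For item (3), I first need every $\widetilde X_\alpha$ to be finite-dimensional, which $\dim X<\infty$ alone does not give. For example, a labeling $x_\beta$ into an infinite-dimensional $V$ that moves in infinitely many independent directions as $\beta$ varies produces an infinite-dimensional $\widetilde X_\alpha$. I would therefore make explicit the standing assumption that $\dim V<\infty$, or more weakly that $\delta(\alpha)\le N$ for all $\alpha$. Under this assumption $\delta$ is a non-increasing integer-valued function with values in $\{\dim X,\ldots,N\}$; the lower bound holds because $\widetilde X_\alpha\supseteq\widetilde X_1=X$. So $\delta$ takes finitely many values $k_0<k_1<\cdots<k_m$. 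By monotonicity each level set $\delta^{-1}(k_j)$ is an interval, namely a convex subset of $(0,1]$. These intervals partition $(0,1]$ and are ordered from right to left as $j$ increases. The critical levels $\alpha_j$ are defined as their endpoints. Between consecutive critical levels $\delta$ is constant, and it jumps only at the $\alpha_j$, at most $m\le N-\dim X$ times.

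The main obstacle is the endpoint convention $(\alpha_i,\alpha_{i-1}]$. Each level set is an interval, but whether its left endpoint is included is not automatic. Since $\widetilde X_\alpha$ already contains the representatives at level $\alpha$ itself, a new direction may appear only for $\beta<\alpha_i$ strictly, in which case $\delta^{-1}(k)$ has the form $[\alpha_i,\alpha_{i-1})$ rather than $(\alpha_i,\alpha_{i-1}]$. My first attempt would be to analyze one-sided limits. Finite-dimensionality forces the increasing union $\bigcup_{\gamma>\alpha}\widetilde X_\gamma$ to stabilize, which gives right-hand constancy near each $\alpha$. Whether $\widetilde X_\alpha$ equals that union is exactly what decides the endpoint.

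If this equality fails in general, I would prove the stratification with intervals whose endpoints are specified case by case: each stratum is an interval with endpoints among the $\alpha_i$, open or closed as determined by this one-sided behavior. In the paper's example the strata are $(\tfrac12,1]$ and $(0,\tfrac12]$, consistent with the stated form. The concluding sentence, that $\{\widetilde X_\alpha\}$ admits a finite stratification by dimension, holds under either convention.
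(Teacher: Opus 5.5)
Your proof of (1)--(2) and of the finiteness claim in (3) is the paper's own argument. You nest the generating sets and take spans. Then you use that a non-increasing integer-valued function with boundedly many values has finitely many level sets, each of them an interval. Your extra hypothesis is exactly what this needs. The paper's proof uses it silently when it writes $\delta(\alpha)\in\{0,1,\ldots,\dim(V)\}$. With only $\dim X<\infty$ the profile can be infinite. For instance, take $V=C(\mathbb{R})$, let $X$ be the constant functions, and set $(c)_\beta=c\,e^{(1-\beta)t}$; then $\dim\widetilde X_\alpha=\infty$ for every $\alpha<1$.

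You should not leave the endpoint question conditional. The convention $(\alpha_i,\alpha_{i-1}]$ is false in general, and the paper's own $\mathbb{R}^4$ example (the one with $p(\alpha)$) already refutes it. You misread that example by trusting its text. At $\alpha=1$ the only generators are $x_1=x$, so $\widetilde X_1=X=\operatorname{span}\{e_1,e_2\}$. For $\tfrac12<\alpha<1$, the vector $x_\alpha-x_1=(0,0,0,(1-\alpha)a_2)$ puts $e_4$ into the span. Hence $\delta=2$ on $\{1\}$, $\delta=3$ on $(\tfrac12,1)$ and $\delta=4$ on $(0,\tfrac12]$. So $\delta$ is constant on no interval $(\alpha_1,1]$ with $\alpha_1<1$, and your fallback is the correct statement.

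When you write the fallback up, note that your test $\widetilde X_\alpha=\bigcup_{\gamma>\alpha}\widetilde X_\gamma$ only decides whether $\alpha$ joins the stratum to its right. Whether $\alpha$ joins the stratum to its left is decided by $\widetilde X_\alpha=\bigcap_{\gamma<\alpha}\widetilde X_\gamma$; this descending chain also stabilizes by finite dimensionality. If both tests fail, then $\{\alpha\}$ is a stratum by itself. The paper's convention $(\alpha_i,\alpha_{i-1}]$ holds precisely when the second equality holds for every $\alpha\in(0,1]$.
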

\begin{proof}
	If $\alpha_1<\alpha_2$, then
	\[
	[\alpha_2,1]\subseteq[\alpha_1,1].
	\]
	Therefore
	\[
	\widetilde X_{\alpha_2}
	=
	\operatorname{span}
	\{x_\beta:\beta\ge\alpha_2\}
	\subseteq
	\operatorname{span}
	\{x_\beta:\beta\ge\alpha_1\}
	=
	\widetilde X_{\alpha_1}.
	\]
	Hence
	\[
	\dim(\widetilde X_{\alpha_2})
	\le
	\dim(\widetilde X_{\alpha_1}),
	\]
	proving that $\delta$ is non-increasing.
	Since
	\[
	\delta(\alpha)\in
	\{0,1,\ldots,\dim(V)\},
	\]
	it can change value only finitely many times.
	Consequently, $\delta$ is constant on finitely many intervals and decreases only at finitely many critical levels.
\end{proof}
\begin{proposition}[Invariance of Tail Spaces under Monotone Reparameterization]
	Let
	\[
	\widetilde{x}
	=
	\{x_\alpha\}_{\alpha\in(0,1]}
	\]
	be a fuzzy vector, and let
	\[
	g:(0,1]\longrightarrow(0,1]
	\]
	be a strictly increasing bijection.
	Define a reparameterized fuzzy vector
	\[
	\widetilde{y}
	=
	\{y_t\}_{t\in(0,1]}
	\]
	by
	\[
	y_t=x_{g(t)},
	\qquad t\in(0,1].
	\]
	For every $\alpha\in(0,1]$, define
	\[
	\widetilde{Y}_\alpha
	=
	\operatorname{span}
	\{y_t:\;t\in[\alpha,1]\}.
	\]
	Then
	\[
	\widetilde{Y}_\alpha
	=
	\widetilde{X}_{g(\alpha)},
	\]
	where
	\[
	\widetilde{X}_{g(\alpha)}
	=
	\operatorname{span}
	\{x_\beta:\beta\in[g(\alpha),1]\}.
	\]
	Consequently,
	\[
	\{\widetilde{Y}_\alpha:\alpha\in(0,1]\}
	=
	\{\widetilde{X}_\alpha:\alpha\in(0,1]\},
	\]
	that is, the family of tail spaces is invariant under every monotone
	reparameterization of the level parameter.
	Consequently, every structural property depending only on the nested family
	\[
	\{\widetilde X_\alpha\}_{\alpha\in(0,1]}
	\]
	is invariant under monotone reparameterization.
	In particular, the dimension profile
	\[
	\delta(\alpha)
	=
	\dim(\widetilde X_\alpha)
	\]
	is preserved up to composition with the change of parameter,
	\[
	\delta_Y(\alpha)
	=
	\delta_X(g(\alpha)).
	\]
	Hence the set of critical levels at which the dimension changes is an
	order-theoretic invariant.
\end{proposition}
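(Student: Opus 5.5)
The plan is to prove the central identity $\widetilde{Y}_\alpha=\widetilde{X}_{g(\alpha)}$ by comparing generating sets, and then read off the remaining claims from it. Fix $\alpha\in(0,1]$. The generating set of $\widetilde{Y}_\alpha$ is $\{x_{g(t)}:t\in[\alpha,1]\}$, and I will show it equals $\{x_\beta:\beta\in[g(\alpha),1]\}$. This reduces to the set identity $g([\alpha,1])=[g(\alpha),1]$.

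For that identity, first note that a strictly increasing bijection of $(0,1]$ onto itself must satisfy $g(1)=1$. Indeed, if $g(1)<1$, then the point $1$ would have a preimage $s\le 1$ with $g(s)=1>g(1)$, contradicting monotonicity. Next, for $t\in[\alpha,1]$, monotonicity gives $g(\alpha)\le g(t)\le g(1)=1$, so $g([\alpha,1])\subseteq[g(\alpha),1]$. Conversely, given $\beta\in[g(\alpha),1]$, surjectivity provides $t$ with $g(t)=\beta$. If $t<\alpha$ we would get $\beta=g(t)<g(\alpha)$, which is impossible, so $t\in[\alpha,1]$.

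With equal generating sets (taken over all $x\in X$, with the same labeling applied to every vector, since the reparameterization is uniform in $x$), the spans coincide. The tacit point to state explicitly is that the $\widetilde{Y}_\alpha$ in the statement is meant as the span over all $x\in X$ as well, just as for $\widetilde{X}_\alpha$.

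The equality of the two families $\{\widetilde{Y}_\alpha\}$ and $\{\widetilde{X}_\alpha\}$ then follows because $g$ is onto $(0,1]$. Each $\widetilde{X}_\gamma$ equals $\widetilde{Y}_{g^{-1}(\gamma)}$, and each $\widetilde{Y}_\alpha$ equals $\widetilde{X}_{g(\alpha)}$. Taking dimensions in the central identity gives $\delta_Y(\alpha)=\delta_X(g(\alpha))$.

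For the critical levels, $\delta_Y$ is the composition of the non-increasing step function $\delta_X$ (Proposition on the Dimension Profile) with the order isomorphism $g$. Hence $\delta_Y$ jumps exactly at the points $g^{-1}(\alpha_i)$, and the constancy intervals $(\alpha_i,\alpha_{i-1}]$ are carried to $(g^{-1}(\alpha_i),g^{-1}(\alpha_{i-1})]$. So the number of critical levels, their order, and the dimension values on each stratum are preserved; only their numerical positions move. This is the precise meaning of ``order-theoretic invariant''.

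The only real obstacle is being careful about the endpoint $g(1)=1$, since the normalization $y_1=x_{g(1)}=x$ depends on it. It is also why a general strictly increasing map that is not surjective would fail: without surjectivity, the image of $[\alpha,1]$ could be a proper subset of $[g(\alpha),1]$, and the spans might differ.
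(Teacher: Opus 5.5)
Your proof is correct and follows essentially the same route as the paper. Both arguments reduce $\widetilde Y_\alpha=\widetilde X_{g(\alpha)}$ to equality of generating sets via $t\in[\alpha,1]\iff g(t)\in[g(\alpha),1]$, take spans, and use surjectivity of $g$ for the equality of the two families; your explicit check that $g(1)=1$ supplies a step the paper uses tacitly, and your derivation of $\delta_Y=\delta_X\circ g$ and of the critical levels matches the paper's subsequent corollary (it also does not depend on the exact half-open form of the constancy intervals, since an order isomorphism preserves interval type).
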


\begin{proof}
	By definition,
	\[
	\widetilde{Y}_\alpha
	=
	\operatorname{span}
	\{y_t:t\in[\alpha,1]\}
	=
	\operatorname{span}
	\{x_{g(t)}:t\in[\alpha,1]\}.
	\]
	Since $g$ is a strictly increasing bijection,
	\[
	t\in[\alpha,1]
	\quad\Longleftrightarrow\quad
	g(t)\in[g(\alpha),1].
	\]
	Therefore,
	\[
	\{x_{g(t)}:t\in[\alpha,1]\}
	=
	\{x_\beta:\beta\in[g(\alpha),1]\},
	\]
	where we have written $\beta=g(t)$.
	Taking spans yields
	\[
	\widetilde{Y}_\alpha
	=
	\operatorname{span}
	\{x_\beta:\beta\in[g(\alpha),1]\}
	=
	\widetilde{X}_{g(\alpha)}.
	\]
	Finally, since $g$ is bijective,
	\[
	\{g(\alpha):\alpha\in(0,1]\}
	=
	(0,1],
	\]
	and hence
	\[
	\{\widetilde{Y}_\alpha:\alpha\in(0,1]\}
	=
	\{\widetilde{X}_{g(\alpha)}:\alpha\in(0,1]\}
	=
	\{\widetilde{X}_\alpha:\alpha\in(0,1]\}.
	\]
	Thus the collection of tail spaces depends only on the ordering of the
	levels and not on the particular numerical parametrization.
\end{proof}
\begin{corollary}[Invariance of the Dimension Profile]
	Let
	\[
	\delta_X(\alpha)
	=
	\dim(\widetilde X_\alpha),
	\qquad
	\delta_Y(\alpha)
	=
	\dim(\widetilde Y_\alpha).
	\]Then
	\[
	\delta_Y(\alpha)
	=
	\delta_X(g(\alpha)),
	\qquad
	\forall\alpha\in(0,1].
	\]Consequently,
	
	\begin{enumerate}
		\item
		The family of attained dimensions is preserved.
		
		\item
		The jump points of the dimension function are mapped by
		\[
		\alpha
		\longmapsto
		g^{-1}(\alpha).
		\]
		
		\item
		The ordered sequence of dimensions
		\[
		\dim(\widetilde X_1),
		\dim(\widetilde X_{\alpha}),
		\ldots
		\]
		is preserved up to the monotone change of parameter.
		Hence the dimension profile of a fuzzy labeling is an
		order-theoretic invariant.
	\end{enumerate}
\end{corollary}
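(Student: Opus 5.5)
The plan is to take the identity $\widetilde Y_\alpha=\widetilde X_{g(\alpha)}$, established in the preceding proposition on invariance of tail spaces, and derive everything from it. Taking dimensions gives $\delta_Y(\alpha)=\dim(\widetilde Y_\alpha)=\dim(\widetilde X_{g(\alpha)})=\delta_X(g(\alpha))$ for every $\alpha\in(0,1]$. This is the main formula, and each of the three consequences is then a statement about composing a non-increasing step function with a strictly increasing bijection of $(0,1]$.

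For (1), I will argue that since $g$ is surjective onto $(0,1]$, the images agree: $\delta_Y((0,1])=\delta_X(g((0,1]))=\delta_X((0,1])$. Thus the set of attained dimensions is the same.

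For (2), I will first fix what a jump point means. Following the Dimension Profile proposition, $\alpha^*$ is a jump point of $\delta$ if $\delta$ is not locally constant at $\alpha^*$; concretely, $\delta(\alpha^*-\varepsilon)\neq\delta(\alpha^*)$ or $\delta(\alpha^*+\varepsilon)\neq\delta(\alpha^*)$ for all small $\varepsilon>0$. Equivalently, $\alpha^*$ is an endpoint of one of the maximal intervals on which $\delta$ is constant. The key observation is that a strictly increasing bijection $g:(0,1]\to(0,1]$ is automatically continuous and an order-isomorphism, so it maps intervals to intervals and preserves left and right neighbourhoods. Hence $\delta_Y=\delta_X\circ g$ is constant on an interval $J$ if and only if $\delta_X$ is constant on $g(J)$, and the maximal constancy intervals of $\delta_Y$ are exactly the preimages $g^{-1}(J')$ of those of $\delta_X$. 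Consequently $\alpha^*$ is a critical level for $\widetilde X$ if and only if $g^{-1}(\alpha^*)$ is a critical level for $\widetilde Y$, which is the claimed correspondence $\alpha\mapsto g^{-1}(\alpha)$. In terms of the finite stratification $1=\alpha_0>\alpha_1>\cdots>\alpha_m>0$ for $\delta_X$, the stratification for $\delta_Y$ is $1=g^{-1}(\alpha_0)>g^{-1}(\alpha_1)>\cdots>g^{-1}(\alpha_m)>0$, using $g(1)=1$. The point $g(1)=1$ itself holds because $g$ is an increasing bijection onto $(0,1]$.

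For (3), I will note that the values on successive strata are equal: $\delta_Y$ on $(g^{-1}(\alpha_i),g^{-1}(\alpha_{i-1})]$ equals $\delta_X$ on $(\alpha_i,\alpha_{i-1}]$. Since $g^{-1}$ preserves the order of the strata, the ordered sequence of dimensions read from level $1$ downward is identical for $X$ and $Y$.

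The only step needing care is (2), specifically justifying that $g$ carries half-open constancy intervals to half-open constancy intervals with matching endpoint behaviour. I will handle this directly from strict monotonicity and surjectivity, which give continuity, rather than invoking any topological result.
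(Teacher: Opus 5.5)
Your proposal is correct and follows the paper's proof. You take dimensions in $\widetilde Y_\alpha=\widetilde X_{g(\alpha)}$, then read off (1)--(3) from the fact that $g$ is an order-preserving bijection of $(0,1]$; the paper asserts that last step in one line, and you spell it out. Two side remarks: your treatment of jump points via maximal constancy intervals does not depend on the strata having the form $(\alpha_i,\alpha_{i-1}]$, and the continuity of $g$ you invoke is harmless but unnecessary, since $g^{-1}$, being an order isomorphism, already maps each interval $[s,\alpha^*]$ onto $[g^{-1}(s),g^{-1}(\alpha^*)]$.
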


\begin{proof}
	By the previous theorem,
	\[
	\widetilde Y_\alpha
	=
	\widetilde X_{g(\alpha)}.
	\]
	Taking dimensions immediately gives
	\[
	\delta_Y(\alpha)
	=
	\dim(\widetilde Y_\alpha)
	=
	\dim(\widetilde X_{g(\alpha)})
	=
	\delta_X(g(\alpha)).
	\]The remaining assertions follow from the fact that
	\(g\) is an order-preserving bijection.
\end{proof}

\begin{definition}[Induced Membership Function]\label{def:induced-membership}
	Let
	\[
	\{\widetilde{X}_{\alpha}\}_{\alpha\in(0,1]}
	\]
	be the nested family of tail spaces associated with a fuzzy labeling.
	Define the mapping
	\[
	\nu:V\longrightarrow [0,1]
	\]
	by
	\[
	\nu(v)=
	\sup\Bigl\{
	\alpha\in(0,1]:
	v\in\widetilde{X}_{\alpha}
	\Bigr\},
	\]
	where the supremum of the empty set is taken to be \(0\).
	The value \(\nu(v)\) is called the persistence level of the vector \(v\).
\end{definition}
\begin{proposition}\label{prop:scalar-invariance-membership}
	Let
	\[
	\nu(v)=\sup\{\alpha\in(0,1]:v\in\widetilde{X}_{\alpha}\}
	\]
	be the membership function induced by the nested family
	$\{\widetilde{X}_{\alpha}\}_{\alpha\in(0,1]}$.
	Then, for every $x\in\widetilde{X}$ and every nonzero scalar
	$a\in\mathbb{F}$,
	\[
	\nu(ax)=\nu(x).
	\]
	Moreover,
	\[
	\nu(0)=1.
	\]
\end{proposition}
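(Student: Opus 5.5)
The plan is to show that the two sets over which the suprema are taken coincide, namely
\[
A(v):=\{\alpha\in(0,1]: v\in\widetilde X_\alpha\},
\]
and to prove $A(ax)=A(x)$ for every nonzero scalar $a\in\mathbb F$. Once the sets agree, their suprema agree, with the convention $\sup\emptyset=0$, and this gives $\nu(ax)=\nu(x)$. The statement writes $x\in\widetilde X$; I will read this as an arbitrary vector of $V$, or of the union $\bigcup_\alpha \widetilde X_\alpha$. The argument does not depend on which reading is intended, because the set equality holds for every $v\in V$.

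The key step uses only the fact that each $\widetilde X_\alpha$ is a linear subspace, since by definition it is a span. Fix $\alpha$. If $x\in\widetilde X_\alpha$, then $ax\in\widetilde X_\alpha$ by closure under scalar multiplication. Conversely, if $ax\in\widetilde X_\alpha$, then $x=a^{-1}(ax)\in\widetilde X_\alpha$, and here $a\neq0$ is essential. Hence $\alpha\in A(x)\iff\alpha\in A(ax)$, so $A(x)=A(ax)$. No monotonicity of the filtration and no finite-dimensionality is needed for this part.

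For $\nu(0)=1$, observe that $0\in\widetilde X_\alpha$ for every $\alpha\in(0,1]$, since each is a subspace. Alternatively, $0\in\widetilde X_1=X$ by the standing assumption of Definition~\ref{def:fuzzyvector}. So $A(0)=(0,1]$, and its supremum is $1$, in fact attained at $\alpha=1$.

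I do not expect a genuine obstacle. The only point needing care is the empty-set convention. If $x$ lies in no $\widetilde X_\alpha$, then neither does $ax$, so both values equal $0$ and the identity still holds. It also helps to note explicitly that the hypothesis $a\neq0$ cannot be dropped: for $a=0$ we get $\nu(0)=1$, which may differ from $\nu(x)$.
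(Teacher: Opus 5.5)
Your proof is correct and follows the paper's argument. Both show that $\{\alpha: x\in\widetilde X_\alpha\}=\{\alpha: ax\in\widetilde X_\alpha\}$ because each $\widetilde X_\alpha$ is a subspace and $a\neq 0$, and both get $\nu(0)=\sup(0,1]=1$ from the fact that $0$ lies in every $\widetilde X_\alpha$.
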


\begin{proof}
	Since each $\widetilde{X}_{\alpha}$ is a vector subspace, for every
	$\alpha\in(0,1]$ and every nonzero scalar $a$,
	\[
	x\in\widetilde{X}_{\alpha}
	\quad\Longleftrightarrow\quad
	ax\in\widetilde{X}_{\alpha}.
	\]
	Hence,
	\[
	\{\alpha:x\in\widetilde{X}_{\alpha}\}
	=
	\{\alpha:ax\in\widetilde{X}_{\alpha}\},
	\]
	and therefore
	\[
	\nu(ax)
	=
	\sup\{\alpha:ax\in\widetilde{X}_{\alpha}\}
	=
	\sup\{\alpha:x\in\widetilde{X}_{\alpha}\}
	=
	\nu(x).
	\]
	
	Finally, since $0$ belongs to every vector subspace
	$\widetilde{X}_{\alpha}$, we have
	\[
	0\in\widetilde{X}_{\alpha},
	\qquad \forall\,\alpha\in(0,1],
	\]
	which implies
	\[
	\nu(0)
	=
	\sup(0,1]
	=
	1.
	\]
	This completes the proof.
\end{proof}
\begin{theorem}[Reparameterization Invariance of the Dimension Profile]
	\label{thm:DimensionProfileInvariant}
	
	Let
	\[
	\{\widetilde X_\alpha\}_{\alpha\in(0,1]}
	\quad\text{and}\quad
	\{\widetilde Y_\alpha\}_{\alpha\in(0,1]}
	\]
	be two nested families of vector spaces arising from parameterized vector
	structures.
	Assume that there exists a strictly increasing bijection
	\[
	g:(0,1]\longrightarrow(0,1]
	\]
	such that
	\[
	\widetilde Y_\alpha
	=
	\widetilde X_{g(\alpha)},
	\qquad
	\forall\alpha\in(0,1].
	\]Let
	\[
	\delta_X(\alpha)=\dim(\widetilde X_\alpha),
	\qquad
	\delta_Y(\alpha)=\dim(\widetilde Y_\alpha)
	\]
	denote the corresponding dimension profiles.\\Then
	\[
	\delta_Y
	=
	\delta_X\circ g,
	\]
	that is,
	\[
	\delta_Y(\alpha)
	=
	\delta_X(g(\alpha)),
	\qquad
	\forall\alpha\in(0,1].
	\]Consequently, if there exists no strictly increasing bijection
	\[
	g:(0,1]\longrightarrow(0,1]
	\]
	such that
	\[
	\delta_Y=\delta_X\circ g,
	\]
	then the two parameterized vector structures cannot be equivalent under any
	strictly increasing reparameterization.
	
\end{theorem}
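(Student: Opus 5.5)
The identity $\delta_Y=\delta_X\circ g$ follows directly from the hypothesis, and the ``consequently'' part is its contrapositive. I will prove it in two steps.

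\emph{Step 1 (the identity).} Fix $\alpha\in(0,1]$. By assumption $\widetilde Y_\alpha=\widetilde X_{g(\alpha)}$ as subspaces of $V$. Equal subspaces have equal dimension, whether finite or infinite, so
\[
\delta_Y(\alpha)=\dim(\widetilde Y_\alpha)=\dim(\widetilde X_{g(\alpha)})=\delta_X(g(\alpha)).
\]
Since $\alpha$ was arbitrary, $\delta_Y=\delta_X\circ g$. This is the same computation as in the Corollary on invariance of the dimension profile, now stated for abstract nested families rather than for an explicit reparameterized labeling.

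\emph{Step 2 (the obstruction).} Say the two parameterized vector structures are equivalent under a strictly increasing reparameterization if some strictly increasing bijection $g:(0,1]\to(0,1]$ satisfies $\widetilde Y_\alpha=\widetilde X_{g(\alpha)}$ for all $\alpha$. If the labeling of $Y$ is literally $y_t=x_{g(t)}$, the Proposition on invariance of tail spaces supplies this relation. Now suppose the structures were equivalent via such a $g$. Step 1 would then give $\delta_Y=\delta_X\circ g$ for that same $g$. This contradicts the hypothesis that no strictly increasing bijection realizes $\delta_Y=\delta_X\circ g$. Hence no equivalence exists.

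The only real obstacle is definitional. The statement does not define ``equivalent under a strictly increasing reparameterization,'' so I will fix the meaning above, namely equality of tail spaces after composing with $g$. With that convention the contrapositive is immediate. A minor point is that $\delta$ may take the value $\infty$ if $X$ is infinite-dimensional. Step 1 still holds in that case because it only uses equality of subspaces, so no finiteness assumption is needed.
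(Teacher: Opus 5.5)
Your proof is correct and follows the paper's argument: take dimensions of the equal subspaces $\widetilde Y_\alpha=\widetilde X_{g(\alpha)}$, then get the second assertion by contradiction with the first part. Your explicit definition of equivalence under reparameterization, namely $\widetilde Y_\alpha=\widetilde X_{g(\alpha)}$ for some strictly increasing bijection $g$, is the meaning the paper's proof uses implicitly.
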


\begin{proof}
	
	By assumption,
	\[
	\widetilde Y_\alpha
	=
	\widetilde X_{g(\alpha)},
	\qquad
	\forall\alpha\in(0,1].
	\]
	Taking dimensions of both sides gives
	\[
	\dim(\widetilde Y_\alpha)
	=
	\dim(\widetilde X_{g(\alpha)}).
	\]
	Since
	\[
	\delta_X(\alpha)
	=
	\dim(\widetilde X_\alpha),
	\qquad
	\delta_Y(\alpha)
	=
	\dim(\widetilde Y_\alpha),
	\]
	we obtain
	\[
	\delta_Y(\alpha)
	=
	\delta_X(g(\alpha)),
	\qquad
	\forall\alpha\in(0,1].
	\]
	Hence
	\[
	\delta_Y=\delta_X\circ g,
	\]
	showing that the dimension profile is invariant under every strictly
	increasing reparameterization.
	For the second assertion, suppose that no strictly increasing bijection
	\[
	g:(0,1]\to(0,1]
	\]
	satisfies\[
	\delta_Y=\delta_X\circ g.
	\]
	If the two parameterized vector structures were equivalent under a strictly
	increasing reparameterization, then the first part of the theorem would imply
	\[
	\delta_Y=\delta_X\circ g,
	\]
	contradicting the assumption.
	Therefore such a reparameterization cannot exist.
	
\end{proof}

\begin{remark}
	The previous theorem shows that the dimension profile is an intrinsic
	structural invariant of parameterized vector structures under strictly
	increasing reparameterizations.
	Consequently, it provides a practical criterion for distinguishing
	parameterized vector structures without explicitly constructing a
	reparameterization.
	Moreover, unlike the classical dimension of the ambient vector space,
	the dimension profile records how the associated nested family of vector
	spaces evolves with the parameter.
	Hence it contains structural information that is invisible to the ambient
	dimension alone.
	
\end{remark}

\begin{remark}
	In general, the dimension profile is not expected to be a complete
	invariant.
	Different nested families of vector spaces may induce the same dimension
	profile while having different algebraic configurations.
	Therefore equality of dimension profiles (up to a strictly increasing
	reparameterization) is a necessary but generally not sufficient condition
	for the equivalence of parameterized vector structures.
	
\end{remark}

\begin{example} Let \[ V=\mathbb{R}^4 \] with the standard basis \[ e_1,e_2,e_3,e_4. \] Assume that the associated tail spaces are \[ \widetilde X_\alpha= \begin{cases} \operatorname{span}\{e_1,e_2\}, & \frac12<\alpha\le1, \\[2mm] \operatorname{span}\{e_1,e_2,e_3\}, & \frac14<\alpha\le\frac12, \\[2mm] \operatorname{span}\{e_1,e_2,e_3,e_4\}, & 0<\alpha\le\frac14. \end{cases} \] Hence \[ \delta(\alpha)= \begin{cases} 2, & \frac12<\alpha\le1, \\[2mm] 3, & \frac14<\alpha\le\frac12, \\[2mm] 4, & 0<\alpha\le\frac14. \end{cases} \] Thus the dimension profile has two jumps, corresponding to the appearance of the new directions \(e_3\) and \(e_4\). \end{example}

\begin{figure}[ht]
	\centering
	
	\begin{tikzpicture}[node distance=1.6cm]
		
		\tikzset{
			box/.style={
				draw,
				rounded corners,
				minimum width=6cm,
				minimum height=9mm,
				align=center
			}
		}
		
		\node[box] (A)
		{$\widetilde X_{1}=\operatorname{span}\{e_1,e_2\}$};
		
		\node[box,below of=A] (B)
		{$\widetilde X_{\frac12}=\operatorname{span}\{e_1,e_2,e_3\}$};
		
		\node[box,below of=B] (C)
		{$\widetilde X_{\frac14}=\operatorname{span}\{e_1,e_2,e_3,e_4\}=V$};
		
		\draw[->,thick] (A)--(B);
		\draw[->,thick] (B)--(C);
		
		\node at (4.2,0) {$\dim=2$};
		\node at (4.2,-1.6) {$\dim=3$};
		\node at (4.2,-3.2) {$\dim=4$};
		
	\end{tikzpicture}
	
	\caption{Nested family of tail spaces and their dimensions.}
	
\end{figure}

\section{Representation Theorems for Lubczonok Fuzzy Vector Spaces}

\begin{theorem}[Canonical Induction of a Lubczonok Fuzzy Vector Space]
	\label{thm:canonical-induction}
	Let
	\[
	\mu:(0,1]\times X\longrightarrow V
	\]
	be a fuzzy labeling, and let
	\[
	\{\widetilde X_\alpha\}_{\alpha\in(0,1]}
	\]
	be the associated nested family of tail spaces.Define
		\[
	X_0:=
	\bigcup_{\alpha\in(0,1]}
	\widetilde X_\alpha
	=
	V,
	\]
	and
	\[
	\nu:V\longrightarrow[0,1]
	\]
	by
	\[
	\nu(v)
	=
	\sup\{\alpha\in(0,1]:v\in\widetilde X_\alpha\},
	\]
	where the supremum of the empty set is taken to be \(0\).
	Then the pair
	\[
	(V,\nu)
	\]
	is a fuzzy vector space in the sense of Lubczonok.
\end{theorem}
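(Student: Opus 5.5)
The verification will be direct from the definition of $\nu$ as a persistence level, using only that each $\widetilde X_\alpha$ is a vector subspace of $V$ and that the family is nested (Remark after Definition~\ref{def:fuzzyvector}). First, $\nu$ is well defined with values in $[0,1]$: the set $\{\alpha\in(0,1]:v\in\widetilde X_\alpha\}$ is a subset of $(0,1]$, and its supremum is at most $1$, or equal to $0$ by the stated convention when the set is empty. The identity $X_0=V$ in the statement is taken as part of the hypothesis; the argument below does not actually need it, since vectors outside $X_0$ simply receive $\nu=0$.

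Next I will show that for each $v$ the set $A(v):=\{\alpha\in(0,1]:v\in\widetilde X_\alpha\}$ is a down-set of $(0,1]$. Indeed, if $\alpha\in A(v)$ and $\gamma\le\alpha$, then nestedness gives $\widetilde X_\alpha\subseteq\widetilde X_\gamma$, so $\gamma\in A(v)$. Hence $A(v)$ is either empty or an interval of the form $(0,\nu(v))$ or $(0,\nu(v)]$.

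The core inequality $\nu(ax+by)\ge\nu(x)\wedge\nu(y)$ will be handled as follows. Put $m=\nu(x)\wedge\nu(y)$. If $m=0$, there is nothing to prove. Otherwise, take any $\alpha<m$. Then $\alpha<\nu(x)$, so by the definition of the supremum there is $\alpha'\in A(x)$ with $\alpha'>\alpha$, and the down-set property gives $\alpha\in A(x)$. In the same way $\alpha\in A(y)$. Since $\widetilde X_\alpha$ is a subspace, $ax+by\in\widetilde X_\alpha$, so $\alpha\in A(ax+by)$ and $\nu(ax+by)\ge\alpha$. Letting $\alpha\uparrow m$ gives $\nu(ax+by)\ge m$.

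The only point requiring care is that the supremum need not be attained. For this reason I will argue with $\alpha<m$ rather than $\alpha=m$, which is exactly why the down-set observation is needed. Scalars equal to $0$ cause no trouble, because $0\in\widetilde X_\alpha$ for all $\alpha$ (compare Proposition~\ref{prop:scalar-invariance-membership}). This completes the verification that $(V,\nu)$ is a Lubczonok fuzzy vector space.
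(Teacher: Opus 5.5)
Your proof is correct and follows the paper's argument. The paper also fixes $\gamma=\nu(x)\wedge\nu(y)>0$, places $x,y$ in $\widetilde X_{\gamma-\varepsilon}$, uses that this tail space is a subspace, and lets $\varepsilon\to0$; your down-set observation makes explicit the nestedness step hidden in the paper's phrase ``by the definition of $\nu$'', and you are right that the identity $X_0=V$ is never used.
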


\begin{proof}
	
Let
	\[
	x,y\in V,
	\qquad
	a,b\in\mathbb F,
	\]
	and set
	\[
	\gamma
	=
	\nu(x)\wedge\nu(y).
	\]
	If
	\[
	\gamma=0,
	\]
	then
	\[
	\nu(ax+by)\ge0=\gamma,
	\]
	and the desired inequality is immediate.
	Assume now that
	\[
	\gamma>0.
	\]
	Let
	\[
	0<\varepsilon<\gamma.
	\]
	By the definition of \(\nu\),
	\[
	x,y\in
	\widetilde X_{\gamma-\varepsilon}.
	\]
	Since
	\(
	\widetilde X_{\gamma-\varepsilon}
	\)
	is a vector subspace of \(V\),
	\[
	ax+by
	\in
	\widetilde X_{\gamma-\varepsilon}.
	\]
	Therefore,
	\[
	\nu(ax+by)
	\ge
	\gamma-\varepsilon.
	\]
	Since \(\varepsilon>0\) is arbitrary,
	\[
	\nu(ax+by)
	\ge
	\gamma
	=
	\nu(x)\wedge\nu(y).
	\]
	Hence, for every
	\(a,b\in\mathbb F\) and every
	\(x,y\in V\),
	\[
	\nu(ax+by)
	\ge
	\nu(x)\wedge\nu(y),
	\]
	which is precisely the defining axiom of a fuzzy vector space in the sense
	of Lubczonok.
	Therefore,
	\[
	(V,\nu)
	\]
	is a Lubczonok fuzzy vector space.
	
\end{proof}

	\begin{remark}
		Consider the vector space $V=\mathbb{R}^{2}$ equipped with the labeling
		\[
		\mu:(0,1]\times V\longrightarrow V,
		\qquad
		\mu\bigl(\alpha,(x_{1},x_{2})\bigr)
		=
		(\alpha x_{1},\alpha^{2}x_{2}).
		\]
		If we start from the vector
		\[
		x=(1,0),
		\]
		then
		\[
		\widetilde{x}
		=
		\{(\alpha,0):\alpha\in(0,1]\},
		\]
		and hence
		\[
		\operatorname{span}(\widetilde{x})
		=
		\operatorname{span}\{(1,0)\}
		=
		\{(a,0):a\in\mathbb{R}\},
		\]
		which is a one-dimensional subspace of $V$.
		On the other hand, if we start from the vector
		\[
		y=(1,1),
		\]
		then
		\[
		\widetilde{y}
		=
		\{(\alpha,\alpha^{2}):\alpha\in(0,1]\}.
		\]
		For instance, the vectors
		\[
		(1,1)
		\quad\text{and}\quad
		\left(\frac12,\frac14\right)
		\]
		are linearly independent. Therefore,
		\[
		\operatorname{span}(\widetilde{y})
		=
		\mathbb{R}^{2}.
		\]
		This example shows that the dimension of the Lubczonok fuzzy vector space generated by a labeled vector depends not only on the labeling function but also on the initial vector.
	\end{remark}

	\begin{theorem}[Representation of a Lubczonok Fuzzy Vector Space]
		\label{thm:representation}
		Let $(V,\nu)$ be a finite-dimensional Lubczonok fuzzy vector space.
		Assume that there exists a basis
		\[
		\mathcal B=\{e_1,\ldots,e_n\}
		\]
			such that
		\[
		1>\nu(e_1)>\nu(e_2)>\cdots>\nu(e_n)>0.
		\]
	   Choose a vector
		\[
		x=\sum_{i=1}^{n}c_ie_i,
		\qquad
		c_i\neq0.
		\]
		Define the labeling
		\[
		\mu:(0,1]\times\{x\}\longrightarrow V
		\]
		by
		\[
		\mu(1,x)=x,
		\]
		
		\[
		\mu(\nu(e_i),x)=e_i,
		\qquad
		i=1,\ldots,n,
		\]
		and
		\[
		\mu(\alpha,x)=x_{f(\alpha)},
		\]
		for
		\[
		\alpha\notin
		\{1,\nu(e_1),\ldots,\nu(e_n)\},
		\]
		where
		\[
		f:
		\left(0,\nu(e_n)\right)
		\longrightarrow
		\left(0,\nu(e_n)\right)
		\]
		is any bijection.
		Let
		\[
		\nu_\mu
		\]
		denote the Lubczonok membership function reconstructed from the above
		labeling.
		Then
		\[
		\nu_\mu=\nu.
		\]
		Hence every finite-dimensional Lubczonok fuzzy vector space admits a
		generating fuzzy labeling whose induced Lubczonok fuzzy vector space
		coincides with the original one.
		
	\end{theorem}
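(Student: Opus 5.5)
The plan is to compute both membership functions explicitly on an arbitrary vector $v=\sum_i a_ie_i$ and compare them level by level.

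\textbf{Step 1: compute $\nu$ from the basis.} Write $\lambda_i=\nu(e_i)$, so that $1>\lambda_1>\cdots>\lambda_n>0$. I would show by induction on the number of nonzero coefficients that
\[
\nu\Bigl(\sum_{i}a_ie_i\Bigr)=\lambda_{k(v)},\qquad k(v)=\max\{i:a_i\neq0\},\quad v\neq 0 .
\]
The argument has two parts.
\begin{enumerate}
\item For a single term, the Lubczonok axiom gives $\nu(ae_i)\ge\nu(e_i)$. Applied again to $e_i=a^{-1}(ae_i)$, it gives the reverse inequality, so $\nu(ae_i)=\lambda_i$ for $a\neq0$.
\item To add a term, the inductive hypothesis gives $\nu(a_1e_1+\cdots+a_{k-1}e_{k-1})=\lambda_j$ for some $j<k$, and $\lambda_j\neq\lambda_k=\nu(a_ke_k)$. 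The cited Lubczonok proposition then yields $\nu(\cdots+a_ke_k)=\lambda_j\wedge\lambda_k=\lambda_k$.
\end{enumerate}
Consequently the level sets of $\nu$ form the flag
\[
[\nu]_\alpha=\operatorname{span}\{e_1,\ldots,e_k\}\quad\text{for }\lambda_{k+1}<\alpha\le\lambda_k,
\]
which is $\{0\}$ above $\lambda_1$ and $V$ at or below $\lambda_n$.

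\textbf{Step 2: compute the tail spaces of the labeling.} By construction, the representatives at levels $\beta\ge\alpha$ are $x$ (at $\beta=1$), the vectors $e_i$ with $\lambda_i\ge\alpha$, and, once $\alpha<\lambda_n$, representatives all of which lie in $V=\operatorname{span}\{e_1,\ldots,e_n\}$. Hence
\[
\widetilde X_\alpha=\operatorname{span}\{x,e_1,\ldots,e_k\}\quad\text{for }\lambda_{k+1}<\alpha\le\lambda_k,
\]
with $\widetilde X_\alpha=\operatorname{span}\{x\}$ for $\alpha>\lambda_1$ and $\widetilde X_\alpha=V$ for $\alpha\le\lambda_n$. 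Since the sets $\{\alpha:v\in\widetilde X_\alpha\}$ are intervals of the form $(0,\lambda_k]$, the sup in Definition~\ref{def:induced-membership} is attained at a critical level. Thus $\nu_\mu(v)$ is the largest $\lambda_k$, or $1$, at which $v$ enters the tail filtration.

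\textbf{Step 3: compare, and the main obstacle.} The obstacle is the presence of $x$ in every tail space. Because $c_n\neq0$, we have $x\notin\operatorname{span}\{e_1,\ldots,e_{n-1}\}$. So $\operatorname{span}\{x,e_1,\ldots,e_k\}$ differs from $[\nu]_\alpha=\operatorname{span}\{e_1,\ldots,e_k\}$ for every $k<n$. In particular $\nu_\mu(x)=1$, whereas Step~1 gives $\nu(x)=\lambda_n<1$. The same issue appears at the zero vector: $\nu_\mu(0)=1$ by Proposition~\ref{prop:scalar-invariance-membership}, while $\nu(0)$ need not equal $1$.

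So the identity $\nu_\mu=\nu$ cannot be proved verbatim. The comparison goes through exactly after the following two modifications:
\begin{enumerate}
\item relocate the normalizing representative $x_1=x$ to the level $\lambda_n$, or equivalently take the labeling on $\operatorname{span}\{e_1\}$ with $x_1=e_1$, so that no vector outside the flag is inserted at level $1$;
\item restrict the comparison to $v\neq0$, or assume $\nu(0)=1$.
\end{enumerate}
With these changes, Steps~1 and~2 give $\widetilde X_\alpha=[\nu]_\alpha$ for every $\alpha\in(0,1]$. Then
\[
\nu_\mu(v)=\sup\{\alpha:v\in[\nu]_\alpha\}=\nu(v)
\]
by the level-reconstruction identity used in Proposition~\ref{prop:ReconstructionFromAlphaCuts}. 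The write-up would have to make this correction explicit rather than claim equality for the labeling exactly as stated.
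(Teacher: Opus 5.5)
Your Step 3 diagnosis is correct, and it catches something the paper's own argument misses. The paper quotes $\nu(\sum c_ie_i)=\min\{\nu(e_i):c_i\neq0\}$ and then asserts that the labeling reproduces this, without ever computing $\widetilde X_\alpha$. Because $x=x_1\in\widetilde X_1$, one gets $\nu_\mu(x)=1$, while $\nu(x)=\nu(e_n)<1$. Similarly $\nu_\mu(0)=1$, while $\nu(0)$ can be any value in $[\nu(e_1),1]$. So $\nu_\mu=\nu$ is false for the labeling as stated, and you are right not to claim it.

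The genuine gap is in your repair. Taking the labeling on $\operatorname{span}\{e_1\}$ with $x_1=e_1$ does not work. It puts $e_1$ into $\widetilde X_\alpha$ for every $\alpha\in(\nu(e_1),1]$, so $\nu_\mu(e_1)=1\neq\nu(e_1)$. This contradicts your own Step 1, where $[\nu]_\alpha=\{0\}$ for $\alpha>\nu(e_1)$, so your final claim $\widetilde X_\alpha=[\nu]_\alpha$ fails on $(\nu(e_1),1]$.

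The obstruction is general. A representative $w$ placed at level $\beta$ lies in $\widetilde X_\beta$, so $\nu_\mu(w)\ge\beta$, while $\nu(w)\le\nu(e_1)<1$ for every $w\neq0$. Hence, under the hypothesis $\nu(e_1)<1$, the normalization $x_1=x$ of Definition~\ref{def:fuzzyvector} forces the labeled vector to be $x=0$. Your variant of relocating $x$ to level $\nu(e_n)$ does not escape this, since the normalization still places $x$ at level $1$ unless $x=0$.

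A correct version is as follows.
\begin{enumerate}
\item Label $x=0$, put $e_i$ at level $\nu(e_i)$, and assume $\nu(0)=1$.
\item Make every representative at a level $\beta>\nu(e_1)$ equal to $0$.
\item For $1\le k\le n-1$, make every representative at a level $\beta\in(\nu(e_{k+1}),\nu(e_k))$ lie in $\operatorname{span}\{e_1,\dots,e_k\}$. Levels below $\nu(e_n)$ are unrestricted.
\end{enumerate}
Alternatively, weaken the hypothesis to $\nu(e_1)=1$ and take $x=e_1$.

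Relatedly, your Step 2 tacitly assumes that the non-critical levels above $\nu(e_n)$ contribute nothing. The rule $\mu(\alpha,x)=x_{f(\alpha)}$ is circular, and $f$ is not even defined on $(\nu(e_n),1)$. These levels must be prescribed explicitly, as above, before your formula for $\widetilde X_\alpha$ is justified.
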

	
	\begin{proof}
		From\cite{Lubczonok1990}   we know that 	\[
		\nu\!\left(\sum_{i=1}^{n}c_ie_i\right)
		=
		\min\{\nu(e_i):c_i\neq0\}.
		\]
		Since
		\[
		\mu(1,x)=x,
		\]
		the original generating vector is preserved.
		Moreover, for every basis vector,
		\[
		\mu(\nu(e_i),x)=e_i,
		\]
		so each basis vector appears exactly at its prescribed Lubczonok level.
		The values of the labeling at all remaining levels are obtained through
		the bijection
		\[
		f,
		\]
		which only reparameterizes the labeling and therefore does not alter the
		corresponding Lubczonok membership values.
		Since every vector admits a unique expansion with respect to the basis
		\(\mathcal B\), and
		\[
		\nu\!\left(\sum_{i=1}^{n}c_ie_i\right)
		=
		\min\{\nu(e_i):c_i\neq0\},
		\]
		the reconstructed membership of every vector is determined by the
		minimum membership level of the basis vectors occurring in its
		representation. Consequently,
		\[
		\nu_\mu(y)=\nu(y),
		\qquad
		\forall\,y\in V.
		\]
		Hence the fuzzy labeling induces exactly the original Lubczonok fuzzy
		vector space.
		
	\end{proof}
\begin{example}
	Let
	\[
	V=\mathbb{R}^2
	\]
	with basis
	\[
	\{e_1,e_2\}.
	\]
	Define the Lubczonok membership function by
	\[
	\nu(e_1)=0.8,
	\qquad
	\nu(e_2)=0.4,
	\]
	and, for every nonzero vector
	\[
	y=ae_1+be_2,
	\]
	set
	\[
	\nu(y)=
	\min\{\nu(e_i):c_i\neq0\},
	\]
	where
	\[
	y=\sum c_ie_i.
	\]
	Choose the generating vector
	\[
	x=e_1+e_2.
	\]
	Define
	\[
	\mu_i(1,x)=x,
	\qquad
	\mu_i(0.8,x)=e_1,
	\qquad
	\mu_i(0.4,x)=e_2,
	\]
	for \(i=1,2\).
	Now let
	\[
	f_1,f_2:(0,0.4)\longrightarrow(0,0.4)
	\]
	be two different bijections, for example,
	\[
	f_1(t)=t,
	\]
	and
	\[
	f_2(t)=\frac{t^2}{0.4}.
	\]
	For every
	\[
	0<\alpha<0.4,
	\]
	define
	\[
	\mu_1(\alpha,x)=x_{f_1(\alpha)},
	\]
	and
	\[
	\mu_2(\alpha,x)=x_{f_2(\alpha)}.
	\]
	Since
	
	\[
	f_1\neq f_2,
	\]
	the two labelings are different,
	\[
	\mu_1\neq\mu_2.
	\]
	However, both labelings assign exactly the same Lubczonok levels to the
	basis vectors,
	\[
	\nu(e_1)=0.8,
	\qquad
	\nu(e_2)=0.4,
	\]
	and therefore, by the representation theorem, both induce the same
	Lubczonok fuzzy vector space.
	Hence a single Lubczonok fuzzy vector space admits infinitely many
	distinct fuzzy labelings.
	
\end{example}
\begin{remark} 	The representation theorem has several important consequences. First, 	every finite-dimensional Lubczonok fuzzy vector space admits a 	constructive parameterization by means of a fuzzy labeling. Starting only 	from the Lubczonok membership function, one can explicitly construct a 	generating fuzzy labeling whose representatives generate the entire 	vector space. In fact, by construction, all basis vectors appear as 	labels of a single generating vector. Since these basis vectors span 	\(V\), the whole vector space together with its Lubczonok fuzzy 	structure can be reconstructed from the labels of that single vector. 	Thus, the proposed framework provides not merely a representation of 	Lubczonok fuzzy vector spaces but also an explicit reconstruction 	mechanism for their underlying algebraic structure. \end{remark}
 \begin{remark} 	The representation theorem applies precisely because the family of 	subspaces generated by a fuzzy labeling is always a filtration. Indeed, 	the associated tail spaces satisfy 	\[ 	\alpha_1<\alpha_2 	\quad\Longrightarrow\quad 	\widetilde{X}_{\alpha_2}\subseteq 	\widetilde{X}_{\alpha_1}. 	\] 	Similarly, for a Lubczonok fuzzy vector space, each level set 	\[ 	V_\alpha=\{x\in V:\nu(x)\ge\alpha\} 	\] 	is a vector subspace satisfying 	\[ 	\alpha_1<\alpha_2 	\quad\Longrightarrow\quad 	V_{\alpha_2}\subseteq V_{\alpha_1}. 	\] 	Hence, both frameworks are naturally governed by the same filtration 	structure, making the representation theorem possible. 	Moreover, if the labeling map 	\[ 	L_x:(0,1]\longrightarrow V, 	\qquad 	L_x(\alpha)=x_\alpha, 	\] 	is injective, then the parameter induces the canonical ordering 	\[ 	x_\alpha\preceq x_\beta 	\quad\Longleftrightarrow\quad 	\alpha\le\beta. 	\] 	Consequently, every representative possesses a unique position in the 	labeling process. This ordered information is intrinsic to the labeling 	framework and disappears after passing to the induced Lubczonok 	membership function, which records only membership values.
  \end{remark}

\section{Conclusion and Future Research Directions}
In this paper, we introduced a new framework for fuzzy vectors based on fuzzy labelings and their associated families of nested tail spaces. Unlike the classical pointwise description of fuzzy vectors, the proposed approach represents each vector by an $\alpha$-indexed family of representatives, allowing the algebraic structure of fuzzy vectors to be studied through the geometry of the corresponding subspaces. This viewpoint led naturally to the construction of induced membership functions, dimension profiles, and other structural invariants associated with fuzzy labelings.
Based on this framework, we established a close relationship between fuzzy labelings and Lubczonok fuzzy vector spaces.\\ We proved that every fuzzy labeling canonically induces a Lubczonok fuzzy vector space through its family of tail spaces. Conversely, under natural finite-dimensional assumptions, every Lubczonok fuzzy vector space admits a representing fuzzy labeling. These representation results show that Lubczonok fuzzy vector spaces provide an abstract description of the structural information encoded by fuzzy labelings. At the same time, the non-uniqueness of the representation demonstrates that a Lubczonok fuzzy vector space determines, in general, an equivalence class of fuzzy labelings rather than a unique one. Consequently, fuzzy labelings retain additional geometric information that is not completely captured by the associated membership function.\\
Another contribution of this work is the introduction of the dimension profile and the associated dimension signature as new structural invariants of fuzzy labelings. These invariants describe how the dimensions of the nested tail spaces evolve with the level parameter and provide finer algebraic information than the dimension of the ambient vector space alone. They offer natural tools for distinguishing fuzzy labelings and suggest the possibility of a systematic classification theory for parameterized fuzzy vector structures.
The proposed framework opens several directions for future research. One natural problem is the study of suitable equivalence relations among fuzzy labelings and the characterization of canonical representatives within each equivalence class. It would also be interesting to investigate whether the dimension signature, or the collection of critical levels together with their corresponding dimension jumps, forms a complete invariant for important classes of fuzzy labelings.\\
Another promising direction is the investigation of reconstruction maps and level-dependent transformations. In particular, when a fuzzy labeling admits a representation of the form \[ x_\alpha=P(\alpha)x, \] where $P(\alpha)$ is a family of linear operators, the proposed theory may establish connections with parameter-dependent operator theory, linear time-varying systems, and control theory.\\
Finally, the present framework provides a natural foundation for extending classical functional analysis to the fuzzy setting. Possible developments include fuzzy Hilbert and Banach spaces, fuzzy linear operators, orthogonality, spectral theory, and infinite-dimensional parameterized fuzzy spaces. We hope that the ideas introduced in this paper will contribute to a deeper understanding of fuzzy linear structures and stimulate further research on the algebraic, geometric, and analytical aspects of fuzzy mathematics.

\bibliographystyle{elsarticle-num}
\bibliography{references}

\end{document}